\newcommand{\be}{\begin{equation}}
\newcommand{\ee}{\end{equation}}
\newcommand{\ba}{\begin{eqnarray*}}
\newcommand{\ea}{\end{eqnarray*}}
\theoremstyle{plain}
\newtheorem{theorem}{Theorem}
\theoremstyle{definition}
\newtheorem{definition}{Definition}
\newtheorem{lemma}{Lemma}
\newtheorem{corollary}{Corollary}
\newtheorem{remark}{Remark}
\newtheorem{example}{Example}
\begin{document}

\begin{center}
 \textbf{New expansion of the Riemann zeta function}
  \medskip
 
 B. Candelpergher
 
  \medskip
 \textsl{\small Universit\'e Côte d'Azur, CNRS, LJAD (UMR 7351), Nice, France.   candel@univ-cotedazur.fr}

 \end{center}
  \bigskip
  
\begin{abstract}  This article presents polynomial expansions for the Dirichlet eta function and Riemann zeta function that are convergent  in the critical strip. To do this  we introduce a family of hypergeometric  polynomials,  whose roots lie on the line $\{\Re(s)=1/2\}$, and that  are related to Meixner-Pollaczek  polynomials. 
We also obtain orthonormal expansions for  eta and zeta  restricted to 
 the line $\{\Re(s)=1/2\}$. The coefficients of these  expansions are given explicitly as  linear combinations with rational coefficients of $\log(2),$  Euler's constant $\gamma$,  and  zeta values at positive integers.

\end{abstract}

\section{Introduction}

Some polynomial expansions of  Riemann's zeta function  (cf.[E-K-K], [F-V],[B],[M], [V])
 are based on the Newton interpolation formula for functions related to the zeta function. For example
 $$
\zeta(s)-\frac1{s-1}=\sum_{m= 0}^{+\infty}(-1)^mc_m \binom{s-1}{m},
$$
with 
$$c_m=\gamma+\sum_{k= 1}^m\binom{m}{k}(-1)^k\big(\zeta(k+1)-\frac1{k}\big).$$
Others (cf.[E-G])  
are linked to the well known  expansion
$$\zeta(s)-\frac1{s-1}=\sum_{m= 0}^{+\infty}(-1)^m\gamma_m\frac{(s-1)^m}{m!}
$$
where $\gamma_m$ are the Stieltjes constants. 

\medskip

The polynomial expansions that we give in this paper  are related to the Mellin transform of the functions  $x\mapsto  e^{- x}L_m(2x) $ where
  $(L_m)_{m\geq 0}$ is the sequence of Laguerre polynomials   defined by
\begin{equation}\label{I1}
L_m(x)=\sum_{k= 0}^m\binom{m}{k}(-1)^k\frac{x^k}{k!}.
\end{equation}
Let $P_m$ the polynomial defined, for $\Re(s)>0$, by
\begin{equation}\label{I3}
P_m(s)
=\frac 2{\Gamma(s)}\int_0^{+\infty} e^{- x}L_m(2x)x^{s-1}\ dx.
\end{equation}
We prove (cf.Thm. 1) that the sequence of polynomials 
$
Q_m:t\mapsto \frac12P_m(\frac12+it)
$
is an orthonormal basis of the space $L^2\big(\mathbb R,\frac{dt}{\cosh(\pi t)}\big)$. 
This implies  that the roots of polynomials $P_m$  are on the line $\{\Re(s)=1/2\}.$

\noindent The polynomials $P_m$ are also given by the generating function
\begin{equation}\label{I2}
z^{-s}=\sum_{m= 0}^{+\infty}\frac1{1+z}\big(\frac{z-1}{z+1}\big)^mP_m(s)\ \text{ for }\ \Re(z)>0.
\end{equation}
If we assign the integer values 1, 2, 3, ... to $z$ in this formula,  then for any   sequence $(c_n)_{n\geq1}$ of complex numbers, we get a family  of relations 
\begin{equation}\label{I4}
\frac{c_n}{n^{s}}=\sum_{m= 0}^{+\infty}\frac{c_n}{1+n}\big(\frac{n-1}{n+1}\big)^mP_m(s), \  \text{ for }\  n=1,2,3,\dots
\end{equation}
It's tempting to sum up these relations  to obtain at least formally
$$
\sum_{n\geq 1}\frac {c_n}{n^s}=\sum_{m= 0}^{+\infty}\Big(\sum_{n\geq 1}\frac{c_n}{1+n}\big(\frac{n-1}{n+1}\big)^m\Big)P_m(s).
$$

We begin by examining the case where $c_n=(-1)^{n-1}$ for every $n\geq 1$,  which correspond to  Dirichlet eta function
$$
\eta(s)=\sum_{n=1}^{+\infty}\frac{(-1)^{n-1}}{n^s}.
$$
In this case the series  $\sum_{n\geq 1}\frac{(-1)^{n-1}}{1+n}\big(\frac{n-1}{n+1}\big)^m$ are convergent and, for $0<\Re(s)<1$, we obtain (cf. Thm. 2) the convergent expansion 
$$
\eta(s) =\sum_{m= 0}^{+\infty}a_mP_m(s)\  \text{ where }\  a_m=\sum_{n= 1}^{+\infty}\frac{(-1)^{n-1}}{1+n}\big(\frac{n-1}{n+1}\big)^m.
$$

We then consider the case where  $c_n=1$ for every $n\geq 1$, which correspond to  Riemann zeta function
$$
\zeta(s)=\sum_{n=1}^{+\infty}\frac{1}{n^s}.
$$
 In this case the series $\sum_{n\geq 1}\frac{1}{1+n}\big(\frac{n-1}{n+1}\big)^m$ are divergent.
 To make sense of these divergent sums, we will use Ramanujan's summation method, which we will describe in  the appendix.
If we denote by $\sum_{n\geq 1}^{\mathcal{R}}$ the sum of a series in the Ramanujan sense, we obtain  
 (cf. Thm. 4), for $0<\Re(s)<1$, the convergent expansion 
 $$
\sum_{n\geq 1}^{\mathcal{R}}\frac1{n^s}=\sum_{m= 0}^{+\infty}z_mP_m(s) \ \text { where }\ 
z_m=\sum_{n\geq 1}^{\mathcal{R}}\frac1{n+1}\Big(\frac{n-1}{n+1}\Big)^m.
 $$
 Since  we will see in the appendix, that  for $s\neq 1$ we have
$$
\sum_{n\geq 1}^{\mathcal{R}}\frac1{n^s}=\zeta(s)-\frac1{s-1},
$$
then, after an explicit evaluation of $z_m$,  we obtain (cf. Thm. 6), for $ 0<\Re(s)<1$, the following convergent expansion
\begin{equation*}
\zeta(s)=\sum_{m= 0}^{+\infty}x_mP_m(s),
\ \text{ with }\ 
x_m=\gamma+\sum_{k=1}^m \binom{m}{k}(-2)^k\zeta(k+1) +r_m
\end{equation*}
where $r_m$ is the rational  
 $$
 r_m=(-1)^{m+1}+\sum_{k=1}^m \frac{1+(-1)^{k-1}}{k}.
 $$

\section{ The sequence of polynomials $(P_m)$}

\subsection{Definition and properties}
We will begin by defining the sequence of  polynomials $(P_m)$ using a generating function.
\noindent For ${m\geq 0}$, let $\psi_m$  the function defined in the half-plane $\{Re(z)>0\}$ by 
\begin{equation}
\psi_m(z)=\frac1{1+z}\big(\frac{z-1}{z+1}\big)^m,
\end{equation}
and let  for $s\in \mathbb C$ the complex sequence $(P_m(s))$  given by the generating function
\begin{equation}
\frac{2}{1-\xi }\Big({\frac{1-\xi}{1+\xi}}\Big)^{s}=\sum_{m= 0}^{+\infty}P_m(s)\ \xi^m\  \text{ for }\ \ \ \vert\xi\vert<1.
\end{equation}
Then, with $\xi=\frac{z-1}{z+1}$,  we have for $s\in \mathbb C$ and $Re(z)>0$, the expansion 
\begin{equation}
z^{-s}=\sum_{m= 0}^{+\infty}\psi_m(z)\, P_m(s).
\end{equation}
Let   $\xi=\frac{x}{1+x}$, with $x\in]0,1[$, then by (6)  we get
\begin{equation*}
\sum_{m= 0}^{+\infty}P_m(s)\Big(\frac{x}{1+x}\Big)^{m+1}=2x(1+2x)^{-s}=\sum_{k=  0}^{+\infty}(-1)^k2^{k+1}\frac{(s)_k}{k!}x^{k+1},
\end{equation*}
where  the Pochhammer symbol $(s)_k$ is the rising factorial 
$$(s)_k=\frac{\Gamma(s+k)}{\Gamma(s)}=s\cdots(s+k-1) \text{ if }  k\geq 1, \text{ and }  (s)_0=1.
$$
Now the well-known equivalence related to Euler's transformation (cf.[H1]) :
\begin{equation*}
\sum_{m\geq 0}b_{m}(\frac x{1+x})^{m+1}=\sum_{k\geq 0}a_kx^{k+1}
\ \Leftrightarrow \  b_{m}=\sum_{k= 0}^m\binom{m}{k}\ a_{k},
\end{equation*}
 gives for $P_m(s)$ the following  explicit polynomial expression   
\begin{equation}
P_m(s)= \sum_{k= 0}^m\binom{m}{k}(-1)^{k}2^{k+1}\ \frac{(s)_k}{k!}.
\end{equation}
Note that by inverting this  binomial transformation we have also 
\begin{equation}
\frac12 \sum_{k= 0}^m\binom{m}{k}(-1)^{k}P_k(s)=2^{m}\ \frac{(s)_m}{m!}.
\end{equation}
It  is easily verified with (8) and  the definition of the Laguerre polynomials  that $P_m(s)$ is related,  for $ \Re(s)>0,$  to the Laguerre polynomial $L_m(x)$ by  
\begin{equation}
{\Gamma(s)}P_m(s)
=2\int_0^{+\infty} e^{- x}L_m(2x)x^{s-1}\ dx.
\end{equation}
For $0<\Re(s)<1$,  we have another  integral expression for $P_m(s) $ which  is obtained by writing
$$
\frac{(s)_k}{k!}=\frac1{\Gamma(s)\Gamma(1-s)}\int_0^1t^{s-1+k}(1-t)^{-s} \  dt\  \text{ for }\ k=0,1,2,3,\dots
$$
thus, using (8), we get    for  $ 0<\Re(s)<1$
\begin{equation}
P_m(s)=2\frac{\sin(\pi s)}{\pi}\int_0^1t^{s-1}(1-t)^{-s} (1-2t)^m\  dt.
 \end{equation}
This integral expression allows us to prove the following lemma which will be useful   later.

\begin{lemma}
\it  For  $0<\Re(s)<1$, we have, for every   $m\geq 1$

\begin{equation*}
\vert P_m(s)\vert \leq e^{\pi\vert Im (s)\vert}\Big(\frac{\Gamma(\Re(s))}{m^{\Re(s)}}+\frac{\Gamma(1-\Re(s))}{m^{1-\Re(s)}}\Big).
 \end{equation*}

\end{lemma}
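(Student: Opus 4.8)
The plan is to work directly from the integral representation (23), estimating it by passing to absolute values. Write $s=\sigma+i\tau$ with $\sigma=\Re(s)\in(0,1)$. For $t\in(0,1)$ the real powers satisfy $|t^{s-1}|=t^{\sigma-1}$, $|(1-t)^{-s}|=(1-t)^{-\sigma}$ and $|(1-2t)^m|=|1-2t|^m$, so from (23)
$$
|P_m(s)|\le \frac{2|\sin(\pi s)|}{\pi}\int_0^1 t^{\sigma-1}(1-t)^{-\sigma}|1-2t|^m\,dt.
$$
For the prefactor I would use $|\sin(\pi s)|^2=\sin^2(\pi\sigma)+\sinh^2(\pi\tau)\le\cosh^2(\pi\tau)$, hence $|\sin(\pi s)|\le\cosh(\pi\tau)\le e^{\pi|\tau|}$; together with $2/\pi<1$ this accounts exactly for the factor $e^{\pi|\Im(s)|}$ in the statement. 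It then remains to bound the integral $I_m:=\int_0^1 t^{\sigma-1}(1-t)^{-\sigma}|1-2t|^m\,dt$ by $\Gamma(\sigma)/m^{\sigma}+\Gamma(1-\sigma)/m^{1-\sigma}$.

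Since the weight $|1-2t|^m$ is symmetric about $t=\tfrac12$ while $t^{\sigma-1}(1-t)^{-\sigma}$ is not, I would split $I_m$ at $t=\tfrac12$. On $[0,\tfrac12]$ one has $(1-t)^{-\sigma}\le 2^{\sigma}$ and $|1-2t|=1-2t$; the substitution $u=2t$ turns this piece into $2^{\sigma}\cdot 2^{-\sigma}\int_0^1 u^{\sigma-1}(1-u)^m\,du$, i.e. the Beta integral $\int_0^1 u^{\sigma-1}(1-u)^m\,du$. On $[\tfrac12,1]$ the substitution $u=1-t$ sends the integrand to $u^{-\sigma}(1-u)^{\sigma-1}(1-2u)^m=u^{(1-\sigma)-1}(1-u)^{-(1-\sigma)}(1-2u)^m$, which is precisely the integrand of the first piece with $\sigma$ replaced by $1-\sigma$. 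Hence by the same steps the second piece is controlled by $\int_0^1 u^{-\sigma}(1-u)^m\,du$.

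The final step is an elementary Beta-to-Gamma estimate: using $1-u\le e^{-u}$ and enlarging the domain to $[0,+\infty)$,
$$
\int_0^1 u^{\sigma-1}(1-u)^m\,du\le \int_0^{+\infty} u^{\sigma-1}e^{-mu}\,du=\frac{\Gamma(\sigma)}{m^{\sigma}},
$$
and likewise the $1-\sigma$ version is bounded by $\Gamma(1-\sigma)/m^{1-\sigma}$. Adding the two pieces gives $I_m\le \Gamma(\sigma)/m^{\sigma}+\Gamma(1-\sigma)/m^{1-\sigma}$, and multiplying by the prefactor $\tfrac{2}{\pi}|\sin(\pi s)|\le e^{\pi|\tau|}$ yields the claimed inequality.

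I expect no serious obstacle; the argument is essentially a careful size estimate. The two points requiring attention are the bound on $|\sin(\pi s)|$, which must produce exactly $e^{\pi|\Im(s)|}$ while absorbing the factor $2/\pi$, and the symmetry substitution $t\mapsto 1-t$ on the second half, which interchanges $\sigma$ and $1-\sigma$ and lets the Beta-to-Gamma trick be invoked only once rather than reproved. The restriction $m\ge1$ is harmless: it ensures $(1-2t)^m$ vanishes at $t=\tfrac12$, but the estimates above in fact hold for every $m\ge1$ uniformly.
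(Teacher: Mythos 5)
Your proposal is correct and is essentially the paper's own argument: start from the integral representation (23), pass to absolute values, bound the prefactor by $\tfrac{2}{\pi}\vert\sin(\pi s)\vert\le e^{\pi\vert\Im(s)\vert}$, split the resulting integral into two halves that are exchanged under $\sigma\leftrightarrow 1-\sigma$, and finish with the Beta-to-Gamma bound $\int_0^1 u^{\alpha-1}(1-u)^m\,du\le\Gamma(\alpha)m^{-\alpha}$. The only cosmetic differences are that the paper first substitutes $u=1-2t$ to symmetrize the integral over $[-1,1]$ and splits at $0$ (bounding $(1+u)^{-\sigma}\le 1$, resp.\ $(1-u)^{\sigma-1}\le 1$) where you split at $t=\tfrac12$ and bound $(1-t)^{-\sigma}\le 2^{\sigma}$, and that the paper proves the final estimate via the substitution $u=e^{-t}$ with $1\le t/(1-e^{-t})\le e^{t}$ where you use the slightly more direct $1-u\le e^{-u}$.
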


\begin{proof}

Using (11) we get 
\begin{eqnarray*}
\vert P_m(s)\vert &\leq&2\frac{\vert\sin(\pi s)\vert}{\pi}\int_0^1t^{\Re(s)-1}(1-t)^{-\Re(s)} \vert1-2t\vert^m \ dt,\\
&\leq&2\frac{\vert\sin(\pi s)\vert}{\pi}\int_{-1}^1(1-u)^{\Re(s)-1}(1+u)^{-\Re(s)} \vert u\vert^m \ du,\\
&\leq&e^{\pi\vert Im (s)\vert}\int_{-1}^1(1-u)^{\Re(s)-1}(1+u)^{-\Re(s)} \vert u\vert^m \ du.
\end{eqnarray*}
Let $a=\Re(s)$, this last  integral is :
$$
I_a=\int_{0}^1(1-u)^{a-1}(1+u)^{-a} u^m \ du+\int_{-1}^0(1-u)^{a-1}(1+u)^{-a}  (-u)^m \ du.
$$
Since $0<a<1$,  we have  
\begin{eqnarray*}
I_a\leq\int_{0}^1(1-u)^{a-1} u^m du+\int_{-1}^0(1+u)^{-a}  (-u)^m\  du
&=&\int_{0}^1(1-u)^{a-1} u^m du+\int_{0}^1(1-u)^{-a}  u^m\  du.
\end{eqnarray*}
Now if $0<\alpha<1$ we have
$$
\int_{0}^1{(1-u)^{\alpha-1} }u^m \ du=\int_{0}^{+\infty}{e^{-t}}{(1-e^{-t})^{\alpha-1}} e^{-mt} \ dt \leq \int_{0}^{+\infty}t^{\alpha-1} e^{-mt} \ dt=\Gamma(\alpha)\ m^{-\alpha},
$$
(the last inequality is justified by  $e^{-t}\leq\frac{1-e^{-t}}t\leq{1}$, for every $t>0$). 
With $\alpha=a $ and $\alpha=1-a $, we obtain 
$$I_a\leq \frac{\Gamma(a)}{m^a}+\frac{\Gamma(1-a)}{m^{1-a}}.$$

\end{proof}

\begin{remark}
The polynomials $P_m$ are related with  Gauss hypergeometric function
$$
F(a,b,c,z)=\sum_{k\geq 0}\frac{(a)_k(b)_k}{(c)_k}\frac{z^k}{k!},
$$
since it  is easily verified with (8) that, for $m\geq 1$, we have 
$$
P_m(s)=2\ F(-m,s,1,2).
$$
By classical properties of hypergeometric function (cf. [G-R]), we  get the relations 
\begin{equation}
P_m(1-s)=(-1)^mP_m(s),
\end{equation}
\begin{equation}
sP_m(s+1)-(s-1)P_m(s-1)=(2m+1)P_m(s).
\end{equation}

For the first values of $m$ we have 
\begin{eqnarray*}
P_0(s)&=&2,\\
P_1(s)&=&2-4s,\\
P_2(s)&=&2-4s+4s^2,\\
P_3(s)&=&2-\frac{16}3 s+4s^2-\frac83 s^3,\\
P_4(s)&=&2-\frac{16}3 s+\frac{20}3s^2-\frac83 s^3+\frac43s^4.\\
\end{eqnarray*}

\end{remark}

\subsection{Orthogonality relations}

  The polynomials $P_m$ are related by Mellin transform to  the  functions $\varphi_m$ defined by
\begin{equation}
\varphi_m(x)= \sqrt2\ e^{- x}L_m(2x).
\end{equation}
Indeed we verify  with (10)  that, for  $\Re(s)>0$,   we have 
\begin{equation}
\mathcal{M}\varphi_m(s)=
\int_0^{+\infty} \varphi_m(x)\ x^{s-1}\ dx= \frac1{\sqrt2}{\Gamma(s)}P_m(s).
\end{equation}
Since the sequence $(\varphi_m)_{m\geq 0}$  is (cf.[Leb]) an orthonormal basis of the space $L^2(]0,+\infty[,\ dx)$, we get the following result.

\begin{theorem}
The sequence $(Q_m)_{m\geq 0}$ of polynomials 
\begin{equation}
Q_m:t\mapsto \frac12P_m(\frac12+it)
\end{equation}
  is an orthonormal basis  of the space $L^2\big(\mathbb R,\frac{dt}{\cosh(\pi t)}\big)$.

  \end{theorem}
  
  \medskip
  
  \begin{proof}

By the 
 Mellin-Parseval  formula (cf. [T]), we have  
$$
\frac{1}{2\pi }\int_{-\infty }^{+\infty }%
\overline{\mathcal{M}(\varphi_m)(\frac{1}2+it)}\ \mathcal{M}(\varphi_n)(\frac{1}2+it)\ dt=\int_{0}^{+\infty }\overline{\varphi_m(x)}\varphi_n(x)\ dx=\delta_{(m,n)},
$$
with $\delta$ the usual Kronecker symbol. 
That is
$$
\frac{1}{4\pi }\int_{-\infty }^{+\infty }\overline{\Gamma(\frac{1}2+it)}\overline{P_m(\frac12+it)}\ \Gamma(\frac{1}2+it)
P_n(\frac12+it)\ dt=\delta_{(m,n)}.
$$
Since
$$\Gamma(\frac{1}2+it)
\overline{\Gamma(\frac{1}2+it)}
 =\frac{\pi}{\cosh\pi t },
$$
 we have the orthogonality relation
$$
\frac{1}{4}\int_{-\infty }^{+\infty }\overline{P_m(\frac12+it)}
\ P_n(\frac12+it)\frac{dt}{\cosh\pi t }=\delta_{(m,n)}.
$$
Thus the   sequence $(Q_m)_{m\geq 0}$ is an orthonormal system in $L^2\big(\mathbb R,\frac{dt}{\cosh(\pi t)}\big)$.
It is also an  orthonormal basis of this space since  a classical condition for the density of the polynomials in  $L^2(\mathbb R,\rho(t)  dt)$ is 
existence of $a>0$ such that
  $$
  \int_{-\infty }^{+\infty }e^{a\vert t\vert}\rho(t)dt <+\infty.   $$
  
   \end{proof}

\medskip
\begin{corollary}

 For every $m\geq 1$, the roots  of 
 $P_m$  are on the line $\{\Re(s)=\frac12\}$.
\end{corollary}

\begin{proof}
By a well-known result  on orthogonal polynomials we deduce from the preceding theorem  that, for every $m\geq 1$, the roots of   $Q_m$  are real. 
 Since $P_m(\frac12+it)=2Q_m(t)$  then the roots of 
 $P_m$  are located on the line $\{\Re(s)=\frac12\}$. 
 \end{proof}
 
 \medskip
 
\begin{remark}
By (12)  and (13) we have, 
for every $m\geq 0$, the relations
$$
\overline{Q_m(t)}=Q_m(-t)=(-1)^mQ_m(t),
$$
$$
TQ_m=(2m+1)Q_m,
$$
where $T$ is the operator on $\mathbb R[t]$ defined by  
$$Tf(t)=
\big(\frac12+it\big)f(t-i)
+\big(\frac12-it\big)f(t+i).
$$
Since $Q_m(t)=F(-m,\frac12+it,1,2)$, we have  also the three terms relation
$$
(m+1)Q_{m+1}(t)=2itQ_m(t)+mQ_{m-1}(t).
$$
These polynomials are related to the Meixner-Pollaczek polynomials (cf. [K-L-S], [I-S]) defined by
$$
\mathcal P^{(1/2)}_m(t,\frac{\pi}2)=(i^m)F(-m,\frac12+it,1,2).
$$
We have for every integer $m\geq 0$
$$Q_m(t)=(-i)^m\mathcal P^{(1/2)}_m(t,\frac{\pi}2).
$$
For the first values of $m$ we get 
\begin{eqnarray*}
&&Q_{{0}}\, = \,1\\
&&Q_{{1}}(t)\, = \,-2\,it\\
&&Q_{{2}}(t)\, = \,\frac12-2\,{t}^{2}\\
&&Q_{{3}}(t)\, = \,-\frac53\,it+\frac43\,i{t}^{3}\\
&&Q_{{4}}(t)\, = \,\frac38-\frac73\,{t}^{2}+\frac23\,{t}^{4}\\
&&\dots
\end{eqnarray*}

\end{remark}

\section{An expansion of the Dirichlet eta function}

\subsection{A convergent expansion}
Let the \textit{Dirichlet eta function} defined, for $\Re(s)>0$, by the \text{Euler alternating series}
$$
\eta(s)=\sum_{n=1}^{+\infty}\frac{(-1)^{n-1}}{n^s}=(1-2^{1-s})\zeta(s).
$$
As we described  in the introduction, the relation (7) with $z=1,2,3,\dots $, gives by summation 
 \begin{equation}
\eta(s)
=\sum_{n=1}^{+\infty}\sum_{m= 0}^{+\infty}\frac{(-1)^{n+1}}{n+1}\Big(\frac{n-1}{n+1}\Big)^mP_m(s).
\end{equation}
The next lemma show that we can swap these two summations.

\medskip

\begin{lemma} For  $0<\Re(s)<1$ we have 
$$
\eta(s)=\sum_{m= 0}^{+\infty}a_mP_m(s) \ \text { with }\ 
 a_m=\sum_{n= 1}^{+\infty}\frac{(-1)^{n+1}}{n+1}\Big(\frac{n-1}{n+1}\Big)^m.
 $$
 \end{lemma}
 
 \begin{proof}
 
 \medskip
 
By residue theorem  we have (cf.[Lin])
 $$
 \sum_{n=1}^{+\infty}\frac{(-1)^{n-1}}{n^s}=\frac1{2i\pi}\int_{\frac12-i\infty}^{\frac12+i\infty}\frac1{z^s}\frac{\pi}{\sin(\pi z)}dz
 =\frac1{2}\int_{-\infty}^{+\infty}\frac{1}{(\frac12+it)^s}\frac1{\cosh(\pi t)}dt.
$$
Since by (7) we can write 
$$
\frac{1}{(\frac12+it)^s}
=\frac1{\frac32+it}\sum_{m= 0}^{+\infty}\Big(\frac{-\frac12+it}{\frac32+it}\Big)^mP_m(s),$$
then we get  
$$
\sum_{n=1}^{+\infty}\frac{(-1)^{n-1}}{n^s}=\frac1{2}\int_{-\infty}^{+\infty}\frac1{\frac32+it}\sum_{m= 0}^{+\infty}\Big(\frac{-\frac12+it}{\frac32+it}\Big)^mP_m(s)\frac1{\cosh(\pi t)}dt.
$$
\bigskip

\noindent Now we can swap   the  signs $\sum$ and $\int$ since, using  lemma 1, 
we have
 $$
\Big\vert\frac1{\frac32+it}\Big(\frac{-\frac12+it}{\frac32+it}\Big)^mP_m(s)\Big\vert\leq\frac1{\sqrt{t^2+9/4}}\Big(\frac {\sqrt{t^2+1/4}}{\sqrt{t^2+9/4}}\Big)^me^{\pi\vert Im (s)}\Big(\frac{\Gamma(\Re(s))}{m^{\Re(s)}}+\frac{\Gamma(1-\Re(s))}{m^{1-\Re(s)}}\Big) 
$$
and for $0<a<1$ we have for every $t\in \mathbb R$
$$
\sum_{m= 0}^{+\infty}\frac1{\sqrt{t^2+9/4}}\Big(\frac {\sqrt{t^2+1/4}}{\sqrt{t^2+9/4}}\Big)^m\frac1{m^a}\leq \sum_{m= 0}^{+\infty}\frac1{\sqrt{t^2+9/4}}\Big(\frac {\sqrt{t^2+1/4}}{\sqrt{t^2+9/4}}\Big)^m\leq t.
$$
Thus we obtain
$$
\sum_{n=1}^{+\infty}\frac{(-1)^{n-1}}{n^s}=\sum_{m= 0}^{+\infty}\frac1{2}\int_{-\infty}^{+\infty}\frac1{\frac32+it}\Big(\frac{-\frac12+it}{\frac32+it}\Big)^m\frac1{\cosh(\pi t)}dt\  P_m(s).
$$
We can write the  integrals in this sum as
$$
 \frac1{2i\pi}\int_{\frac12-i\infty}^{\frac12+i\infty}\frac1{z+1}\Big(\frac{z-1}{z+1}\Big)^m\frac{\pi}{\sin(\pi z)}dz
$$
which, again by the residue theorem, are equal to
$$
\sum_{n=1}^{+\infty}\frac{(-1)^{n+1}}{n+1}\Big(\frac{n-1}{n+1}\Big)^m.
$$

 \end{proof}

 \begin{theorem}

For $ 0<\Re(s)<1$,  we have
\begin{equation*}
\eta(s)=\sum_{m= 0}^{+\infty}a_mP_m(s)
 \end{equation*}
with 
  $a_0=1-\log(2)$ and   
   $$
   a_m=(-1)^m-\log(2)+\sum_{k=1}^m  \binom{m}{k}(-1)^k(1-2^k)\zeta(k+1)\ \text{ for }\ m\geq 1.
   $$

    \end{theorem}

 \begin{proof} We  observe that the coefficients $a_m$ given by  the preceding lemma are also 
$$
a_m=\sum_{n= 2}^{+\infty}\frac{(-1)^{n}}{n}\Big(1-\frac{2}{n}\Big)^m,
$$
and by  binomial expansion  we get for $m\geq 0$
\begin{eqnarray}
 a_m
 =\sum_{k=0}^m  \binom{m}{k}(-2)^k\sum_{n=2}^{+\infty}\frac{(-1)^{n}}{n^{k+1}}=\sum_{k=0}^m  \binom{m}{k}(-1)^k2^k(1-\eta(k+1)).
\end{eqnarray}
Thus $a_0=\sum_{n=2}^{+\infty}\frac{(-1)^{n}}{n}=1-\log(2)$ and for $m\geq 1$
\begin{eqnarray*}
a_m&=&
1-\log(2)+\sum_{k=1}^m  \binom{m}{k}(-1)^k2^k(1-(1-2^{-k})\zeta(k+1))\\
&=&1-\log(2)+\sum_{k=1}^m  \binom{m}{k}(-1)^k2^k+\sum_{k=1}^m  \binom{m}{k}(-1)^k(1-2^{k})\zeta(k+1).
\end{eqnarray*}
Thus the expansion of theorem 2 is simply a consequence of Lemma 2 .

 \end{proof}
   
   \bigskip

For the first values of  $m$ we get 
\begin{eqnarray*}
a_0&=&1-\log(2),\\
a_1&=&-1-\log(2)+\zeta(2),\\
a_2&=&1-\log(2)+2\zeta(2)-3\zeta(3),\\
a_3&=& -1-\log(2)+3\zeta(2)-9\zeta(3)+7\zeta(4),\\
a_4&=& 1-\log(2)+4\zeta(2)-18\zeta(3)+28\zeta(4)-15\zeta(5).\\
 \end{eqnarray*}

   \bigskip
   
   \subsection{An orthonormal expansion}
   Let the function $g_0$ defined, for $t>0$, by 
$$
g_0(t)=\frac{e^{-t}}{e^{-t}+1}.
$$
It is well-known that,  for $\Re(s)>0 $, the Mellin transform of $g_0$ is  
$$\mathcal{M}(g_0)(s)=\Gamma(s)\eta(s).
$$
Since $e^{-t}g_0(t)=e^{-t}-g_0(t)$, then  we get
$$\mathcal{M}(e^{-t}g_0)(s)=\Gamma(s)(1-\eta(s)) \text{ if }\ \Re(s)>0.$$ 
Thus, for $k=0,1,2,3,\dots,$ we have
$$
\int_0^{+\infty}{e^{-t}}    \frac{t^k}{k!}\  g_0(t)\ dt =1-\eta(k+1),
$$
and  by definition of the Laguerre polynomials, we obtain 
\begin{equation*}
\sum_{k=0}^m \binom{m}{k}(-1)^k2^k
(1-\eta(k+1))= \int_0^{+\infty}e^{-t} L_{m}(2t)g_0(t)\ dt.
$$
By  (14) and (18)  this means that 
$$
a_m=\frac1{\sqrt2}\int_0^{+\infty}\varphi_m(t)g_0(t) \ dt. 
\end{equation*}
Consequently we have, in the space $L^2(]0,+\infty[,dx)$,
the orthonormal expansion
 \begin{equation}
g_0=\sqrt{2}\, \sum_{m=0}^{+\infty} a_m\, \varphi_m.
\end{equation}

We now prove  that the expansion of $\eta$ given in theorem 2 is  an  orthonormal expansion if we restrict it to the line $\{\Re(s)=\frac12\}$.

  \begin{theorem}

For every  integer $m\geq 0$ we have 
\begin{equation}
a_m=\frac1{2}\int_{-\infty }^{+\infty }\overline{Q_m(t)}\ \eta(\frac12+it)\frac{dt}{\cosh(\pi  t)}
\end{equation}

In the space $L^2\big(\mathbb R,\frac{dt}{\cosh(\pi t)}\big)$, the function 
$\eta_{1/2}:t\mapsto\eta(\frac{1}2+it)$ has the orthonormal expansion
$$
\eta_{1/2}=2\sum_{m=0}^{+\infty} a_m\, Q_m.
$$

\end{theorem}

\begin{proof}
 \noindent By  Mellin-Parseval  formula (cf. [T]), we have using (19)
$$
a_m=\frac1{\sqrt2}\int_{0}^{+\infty }{\varphi_m(t)}g_0(t)\ dt=\frac{1}{2\pi\sqrt2 }\int_{-\infty }^{+\infty }
\overline{\mathcal{M}(\varphi_m)(\frac{1}2+it)}\ \mathcal{M}(g_0)(\frac{1}2+it)\ dt,
$$
 thus we get
\begin{eqnarray*}
a_m&=&\frac{1}{2\pi }\int_{-\infty }^{+\infty }
\overline{\Gamma(\frac{1}2+it)\frac12P_m(\frac12+it)}\ \Gamma(\frac{1}2+it)\eta(\frac{1}2+it)\ dt\\
&=&\frac{1}{2}\int_{-\infty }^{+\infty }
\overline{\frac12P_m(\frac12+it)}\ \eta(\frac{1}2+it)\frac{dt}{\cosh(\pi  t)}.
\end{eqnarray*}
 
\noindent This gives  
$$a_m=\frac{1}{2}\int_{-\infty }^{+\infty }
\overline{Q_m(\frac12+it)}\ \eta(\frac{1}2+it)\frac{dt}{\cosh(\pi  t)}.
$$
The second assertion follows from the fact that the function $ \eta_{1/2}$ is in $L^2\big(\mathbb R,\frac{dt}{\cosh(\pi t)}\big)$ since
$$
\int_{-\infty }^{+\infty }
\vert \eta(\frac{1}2+it)\vert ^2\frac{dt}{\cosh(\pi  t)}=
\frac14\int_{-\infty }^{+\infty }
\vert 1-\sqrt2 e^{-it\log(2)}\vert ^2\vert \zeta(\frac{1}2+it)\vert ^2\frac{dt}{\cosh(\pi  t)}<+\infty.
$$

\end{proof}

\begin{remark}
By the preceding theorem and using (19) we get 
$$
\int_{-\infty }^{+\infty }
\vert \eta(\frac{1}2+it)\vert ^2\frac{dt}{\cosh(\pi  t)}=4\sum_{m=0}^{+\infty} a_m^2=2\int_0^{+\infty} \big(\frac{e^{-t}}{e^{-t}+1}\big)^2 dt=2\log(2)-1.
$$ 
\end{remark}

\medskip

\begin{corollary}
For any integer $m\geq 0$ we have
$$1-\eta(m+1)=\frac12\int_{-\infty }^{+\infty }\eta(\frac12+it)\frac{(\frac12-it)_m}{m!}\frac{dt}{\cosh(\pi  t)}.
$$

\end{corollary}

\medskip

\begin{proof}

Let $A(k)=1-\eta(k+1)$ for $k\geq 0$,
 then, by (18), we have 
 $$
 a_m=\sum_{k=0}^m  \binom{m}{k}(-1)^k2^kA(k),
 $$
and by binomial inversion we get
 $$
 2^mA(m)=\sum_{k=0}^m  \binom{m}{k}(-1)^ka_k. $$
Using (20) we obtain the integral expression
\begin{eqnarray*}
A(m)
&=&\frac1{2^{m+1}}\int_{-\infty }^{+\infty }%
\sum_{k=0}^m \binom{m}{k}(-1)^k\overline{Q_k(t)}\ \eta(\frac{1}2+it)\frac{dt}{\cosh(\pi  t)}.
\end{eqnarray*}
Since  by (9) we have
$$
 \sum_{k= 0}^m\binom{m}{k}(-1)^{k}\overline{Q_k(t)}= \sum_{k= 0}^m\binom{m}{k}(-1)^{k}\overline{\frac12P_k(\frac12+it)}
=2^{m}\ \frac{(\frac12-it)_m}{m!},
$$
then we obtain
$$
A(m)=\frac12\int_{-\infty }^{+\infty }\frac{(\frac12-it)_m}{m!}\eta(\frac12+it)\frac{dt}{\cosh(\pi  t)}.
$$
\end{proof}

\section{Expansion of the zeta function}

We will apply the  previous method  to obtain an expansion of the zeta function, but the intermediate series are divergent in this case, which leads us to use 
 Ramanujan's summation (see Appendix).

 \subsection{Expansion of $\sum_{n\geq 1}^{\mathcal{R}}\frac1{n^s}$}

\medskip

\begin{theorem}

For $0<\Re(s)<1,$
we have
 \begin{equation}
\sum_{n\geq 1}^{\mathcal{R}}\frac1{n^s}=\sum_{m= 0}^{+\infty}z_mP_m(s) \ \text { where }\ 
z_m=\sum_{n\geq 1}^{\mathcal{R}}\frac1{n+1}\Big(\frac{n-1}{n+1}\Big)^m.
 \end{equation}
This gives, for $0<\Re(s)<1,$  the convergent expansion
\begin{equation}
\zeta(s)=\frac1{s-1}+\sum_{m= 0}^{+\infty}z_m\, P_m(s).
\end{equation}

\end{theorem}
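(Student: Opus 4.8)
The plan is to justify interchanging the Ramanujan summation $\sum_{n\ge1}^{\mathcal R}$ with the Taylor summation $\sum_{m\ge0}$; once the identity $\sum_{n\ge1}^{\mathcal R}\frac1{n^s}=\sum_{m\ge0}z_mP_m(s)$ is proved, the expansion (29) is immediate from Theorem 5 (equation (17)), which gives $\zeta(s)=\sum_{n\ge1}^{\mathcal R}\frac1{n^s}+\frac1{s-1}$ for $s\ne1$, hence on the whole strip $0<\Re(s)<1$. Write $h_m(z)=\frac1{z+1}\big(\frac{z-1}{z+1}\big)^m$, so that $z_m=\sum_{n\ge1}^{\mathcal R}h_m(n)$, and note that combining (18) and (20), for every $z$ with $\Re(z)>0$ (so that $\big|\frac{z-1}{z+1}\big|<1$) one has the convergent expansion $z^{-s}=\sum_{m\ge0}P_m(s)\,h_m(z)$. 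Each $h_m$ is bounded on $\{\Re z>\varepsilon\}$ and tends to $0$ as $|\Im z|\to\infty$, so $h_m\in\mathcal E^\pi$; together with $z\mapsto z^{-s}\in\mathcal E^\pi$ (shown in the proof of Theorem 5) this ensures that all the Ramanujan sums below are given by the integral formula (6).

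First I would set $S_M=\sum_{m=0}^MP_m(s)h_m$ and $R_M=z^{-s}-S_M$, both of which lie in $\mathcal E^\pi$. Applying (6) and using its linearity on the finite sum $S_M$, one gets $\sum_{n\ge1}^{\mathcal R}S_M(n)=\sum_{m=0}^Mz_mP_m(s)$, so that $\sum_{n\ge1}^{\mathcal R}\frac1{n^s}=\sum_{m=0}^Mz_mP_m(s)+\sum_{n\ge1}^{\mathcal R}R_M(n)$, and the whole matter reduces to proving $\sum_{n\ge1}^{\mathcal R}R_M(n)\to0$ as $M\to\infty$. Since $\frac{z-1}{z+1}$ vanishes at $z=1$, the boundary term $\tfrac12R_M(1)=0$, so by (6) it remains only to control the Plana integral $i\int_0^{+\infty}\frac{R_M(1+it)-R_M(1-it)}{e^{2\pi t}-1}\,dt$.

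This integral estimate is the heart of the proof and the main obstacle. On the two lines $z=1\pm it$ one has $\big|\frac{z-1}{z+1}\big|=\rho(t):=\frac{|t|}{\sqrt{4+t^2}}<1$ and $|z+1|=\sqrt{4+t^2}$, so $|R_M(1\pm it)|\le\frac1{\sqrt{4+t^2}}\sum_{m>M}|P_m(s)|\,\rho(t)^m$. Lemma 1 gives the uniform bound $|P_m(s)|\le C(s):=e^{\pi|\Im(s)|}\big(\Gamma(\Re s)+\Gamma(1-\Re s)\big)$ for $m\ge1$, whence summing the full geometric series and simplifying $\frac{\rho}{1-\rho}=\frac{|t|(\sqrt{4+t^2}+|t|)}{4}$ yields the clean bound $|R_M(1\pm it)|\le\frac{C(s)}2\,|t|$, valid for every $t$ and every $M\ge0$. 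The factor $|t|$ is exactly what is needed: near $t=0$ it cancels the $1/t$ singularity of the kernel $1/(e^{2\pi t}-1)$, while for large $t$ the exponential decay of the kernel dominates the linear growth, so $t\mapsto\frac{C(s)|t|/2}{e^{2\pi t}-1}$ is an $M$-independent $L^1(0,+\infty)$ majorant. Since for each fixed $t>0$ the tail $R_M(1\pm it)\to0$ (the series $\sum_mP_m(s)\big(\frac{z-1}{z+1}\big)^m$ converges at $z=1\pm it$), dominated convergence gives that the Plana integral tends to $0$. This proves $\sum_{n\ge1}^{\mathcal R}R_M(n)\to0$, hence $\sum_{n\ge1}^{\mathcal R}\frac1{n^s}=\sum_{m\ge0}z_mP_m(s)$, and adding $\frac1{s-1}$ via Theorem 5 gives (29). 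The delicate point throughout is that $\rho(t)\to1$ as $t\to\infty$, so the geometric series defining $R_M$ is only saved by the interplay between the boundedness of $P_m(s)$ from Lemma 1 and the exponential Plana kernel.
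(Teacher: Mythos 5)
Your proof is correct, and while it rests on the same quantitative facts as the paper's proof of this theorem -- the integral definition (6) of Ramanujan summation, Lemma 1, and the fact that summing the geometric ratio $\rho(t)=t/\sqrt{t^2+4}$ yields a factor $O(t)$ that cancels the $1/t$ singularity of the Plana kernel -- you organize the interchange of limits differently. The paper writes each $z_m$ as a Plana integral, proves absolute summability of the resulting double sum/integral (its Lemma 2), interchanges $\sum_m$ with $\int_0^{+\infty}$, and then resums the inner series in closed form via the generating identity (20) evaluated at $z=\frac{\pm it}{2\pm it}$, recognizing the Plana integral of $n^{-s}$. You never interchange an infinite sum with the integral: you subtract the partial sum $S_M$ from $z^{-s}$, use linearity of $\sum^{\mathcal R}$ on the finite sum, and kill the remainder by dominated convergence in $M$, with the $M$-uniform majorant $|R_M(1\pm it)|\le \frac{C(s)}{2}\,t$ (your computation of $\rho/(1-\rho)$ is right, and is in fact a cleaner version of the estimate closing the paper's Lemma 2, whose final stated bound $t^2/4$ should really be of order $t/2$ -- immaterial for integrability, since the linear factor suffices against the kernel). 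Your route buys two simplifications: you need only the $m$-uniform consequence of Lemma 1, $|P_m(s)|\le e^{\pi|\Im(s)|}\big(\Gamma(\Re(s))+\Gamma(1-\Re(s))\big)$, never its $m^{-\Re(s)}$, $m^{-(1-\Re(s))}$ decay, and you bypass Lemma 2 entirely; the price is the two extra checks you correctly make, namely $R_M\in\mathcal E^{\pi}$ so that (6) applies, and $R_M(1)=0$ -- for the latter, note that the vanishing of $\frac{z-1}{z+1}$ at $z=1$ disposes only of the terms $m\ge1$, and one must also verify $P_0(s)h_0(1)=2\cdot\frac12=1=1^{-s}$, a one-line check your parenthetical reason glosses over. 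Finally, the identity that the paper obtains by resummation enters your argument through the pointwise convergence of $z^{-s}=\sum_{m\ge0}P_m(s)h_m(z)$ at $z=1\pm it$, i.e. the extension of (18)--(20) to all $z$ with $\Re(z)>0$, which you rightly flag; the passage to (29) via Theorem 5 is then identical in both proofs.
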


\medskip

\begin{proof}
If we apply  (7) with $z=1,2,3,\dots,$ and use the Ramanujan summation,
then we get
 \begin{equation}
\sum_{n\geq 1}^{\mathcal{R}}\frac1{n^s}
=\sum_{n\geq 1}^{\mathcal{R}}\sum_{m= 0}^{+\infty}\psi_m(n)P_m(s)
=\sum_{n\geq 1}^{\mathcal{R}}\sum_{m= 0}^{+\infty}\frac1{n+1}\Big(\frac{n-1}{n+1}\Big)^mP_m(s).
\end{equation}
We now prove  that, for  $0<\Re(s)<1$, we can swap these  two summations, that is to say, the series 
\begin{equation}
\sum_{m\geq 0}\sum_{n\geq 1}^{\mathcal{R}}\frac1{n+1}\Big(\frac{n-1}{n+1}\Big)^mP_m(s)
\end{equation}
 is convergent for every $0<\Re(s)<1$, and   its sum  is equal to 
 $$
\sum_{n\geq 1}^{\mathcal{R}}\sum_{m= 0}^{+\infty}\frac1{n+1}\Big(\frac{n-1}{n+1}\Big)^mP_m(s).
$$ 
By definition (R5) of Ramanujan's summation, we have the integral expression 
$$
 \sum_{n\geq 1}^{\mathcal{R}}\frac1{n+1}\Big(\frac{n-1}{n+1}\Big)^m
 =c_m+i\int_0^{+\infty}\Big( \frac1{2+it}\big(\frac{it}{2+it}\big)^m-\frac1{2-it}\big(\frac{-it}{2-it}\big)^m\Big)\frac1{e^{2\pi t}-1}\ dt
 $$
 with $c_0=1/4$ if $m=0$, and $c_m=0$ if $m\geq 1$.
 Since $c_0P_0(s)=1/2$, then the series (24) is also 
 \begin{eqnarray*}
 \frac12+i\sum_{m\geq0}\int_0^{+\infty}\Big( \frac1{2+it}\big(\frac{it}{2+it}\big)^m-\frac1{2-it}\big(\frac{-it}{2-it}\big)^m\Big)\frac1{e^{2\pi t}-1}\ dt\ P_m(s).
 \end{eqnarray*}
 To prove the convergence of this series  and  
swap $\sum $ and $\int$ we use the following lemma.  

 \medskip
 
 \begin{lemma}
 {\it
 For $0<\Re(s)<1$, the series
\begin{equation}
 \sum_{m\geq 0}\Big\vert\Big( \frac1{2+it}\big(\frac{it}{2+it}\big)^m-\frac1{2-it}\big(\frac{-it}{2-it}\big)^m\Big)\ P_m(s)\Big\vert
 \end{equation}
is convergent for every $t>0$, and its sum verifies 
\begin{equation*}
\int_0^{+\infty} \sum_{m= 0}^{+\infty}\Big\vert\Big( \frac1{2+it}\big(\frac{it}{2+it}\big)^m-\frac1{2-it}\big(\frac{-it}{2-it}\big)^m\Big)\ P_m(s)\Big\vert\  \frac1{e^{2\pi t}-1}\ dt<+\infty.
 \end{equation*}
}
 
 \end{lemma}

By this  lemma  the series (24) is convergent for every $0<\Re(s)<1$, and  its sum  is    
$$
\frac12
+i\int_0^{+\infty}\Big( \frac1{2+it}\sum_{m= 0}^{+\infty}\big(\frac{it}{2+it}\big)^mP_m(s)-\frac1{2-it}\sum_{m= 0}^{+\infty}\big(\frac{-it}{2-it}\big)^mP_m(s)\Big)
\frac1{e^{2\pi t}-1}\ dt.
$$
Using (7) with 
$\xi=\frac{1}{1\pm it}$, this last expression is   equal to
$$
\frac12+i\int_0^{+\infty}\Big(\big(\frac{1}{1+it}\big)^s-\big(\frac{1}{1-it}\big)^s\Big)\frac1{e^{2\pi t}-1}\ dt=\sum_{n\geq 1}^{\mathcal{R}}\frac1{n^s}.
$$
The relation (22) follows immediately from (R15).

\medskip

 \textbf{Proof of  lemma 3}. 
 
  We have
 $$
 \Big\vert \frac1{2+it}\big(\frac{it}{2+it}\big)^m-\frac1{2-it}\big(\frac{-it}{2-it}\big)^m\Big\vert\leq 2\Big\vert \frac1{2+it}\big(\frac{it}{2+it}\big)^m\Big\vert=\frac2{\sqrt{t^2+4}}\big(\frac t{\sqrt{t^2+4}}\big)^m
$$
Using  lemma 1,  we see that  the terms of the  series  (25) are, for $m\geq 1$,  dominated by the terms
$$
 \frac4{\sqrt{t^2+4}}e^{\pi\vert Im (s)\vert}\big(\frac t{\sqrt{t^2+4}}\big)^m\Big(\frac{\Gamma(\Re(s))}{m^{\Re(s)}}+\frac{\Gamma(1-\Re(s))}{m^{1-\Re(s)}}\Big) 
 $$
Consequently, the series (25) is convergent, and we have
 $$
 \sum_{m= 0}^{+\infty}\Big\vert\Big( \frac1{2+it}\big(\frac{it}{2+it}\big)^m-\frac1{2-it}\big(\frac{-it}{2-it}\big)^m\Big)\ P_m(s)\Big\vert\ 
 \leq M(t)
 $$
 with
 $$
 M(t)=\frac{4t}{4+t^2}+e^{\pi\vert Im (s)\vert}\frac4{\sqrt{t^2+4}} \sum_{m= 1}^{+\infty}\big(\frac t{\sqrt{t^2+4}}\big)^m\Big(\frac{\Gamma(\Re(s))}{m^{\Re(s)}}+\frac{\Gamma(1-\Re(s))}{m^{1-\Re(s)}}\Big) 
 $$
It remains to prove that
$$
\int_0^{+\infty} M(t)\frac1{e^{2\pi t}-1} \ dt <+\infty.
$$
This follows, for the first term in $M(t)$,  from
 $$
 \int_0^{+\infty} \frac{4t}{4+t^2}\frac1{e^{2\pi t}-1}\ dt<+\infty,
 $$
and  for the others terms from  the fact that, for $0<\alpha<1$, we have
$$
\int_0^{+\infty}\frac 1{\sqrt{t^2+4}}\sum_{m= 1}^{+\infty}\big(\frac t{\sqrt{t^2+4}}\big)^m\frac1{m^{\alpha} } \frac1{e^{2\pi t}-1}\ dt<+\infty,
$$
as a consequence of the following inequalities
$$
\frac 1{\sqrt{t^2+4}}\sum_{m= 1}^{+\infty}\big(\frac t{\sqrt{t^2+4}}\big)^m\frac1{m^{\alpha} }
\leq \frac 1{\sqrt{t^2+4}}\sum_{m= 1}^{+\infty}\big(\frac t{\sqrt{t^2+4}}\big)^m
\leq\frac t{{t^2+4}}\Big(\frac1{ 1-\frac t{\sqrt{t^2+4}}}\Big)\leq \frac{t^2}4.
$$

\end{proof}

 \subsection{Evaluation of  the sum  $\sum_{n\geq 1}^{\mathcal{R}}\frac1{n+1}\Big(\frac{n-1}{n+1}\Big)^m$}

\begin{lemma}
For any integer $m\geq 0$, we have
$$
\sum_{n\geq 1}^{\mathcal{R}}\frac1{n+1}\Big(\frac{n-1}{n+1}\Big)^m
=x_m+y_m
 $$
 with  $ x_0=\gamma-1,\ \  y_0=\log(2),$ and for $m\geq 1$
 
 \begin{equation} 
x_m=\gamma-1+\sum_{k=1}^m \binom{m}{k}(-2)^k\Big(\zeta(k+1)-1-\frac1k\Big) ,
\end{equation}
\begin{equation} 
y_m=\int_0^1\Big(\frac{x-1}{x+1}\Big)^m\frac1{x+1}\ \ dx=\log(2)+\sum_{k=1}^m \binom{m}{k}(-2)^k\frac1k\Big(1-\frac1{2^k}\Big).
\end{equation}
 
 \end{lemma}
 
 \medskip

 \begin{proof}

We apply  the shift formula (R12)  to the  function $f:x\mapsto \frac1{x}\big(1-\frac{2}{x}\big)^m$, then we get 
 $$
\sum_{n\geq 1}^{\mathcal{R}}\frac1{n+1}\Big(\frac{n-1}{n+1}\Big)^m=\sum_{n\geq 1}^{\mathcal{R}}\frac1{n}\Big(1-\frac{2}{n}\Big)^m-(-1)^m+\int_0^1\Big(\frac{x-1}{x+1}\Big)^m\frac1{x+1}\ dx.$$
For any $m$ integer $\geq 0$, let  
\begin{equation*} 
x_m=\sum_{n\geq 1}^{\mathcal{R}}\frac1{n}\Big(1-\frac{2}{n}\Big)^m-(-1)^m\ 
\ \text{ 
and }\  \ 
y_m=\int_0^1\Big(\frac{x-1}{x+1}\Big)^m\frac1{x+1}\ dx.
\end{equation*}
We have $ x_0=\gamma-1,\ \  y_0=\log(2).$  
By the binomial expansion
we get 
for  $m\geq 1$
\begin{eqnarray*}
x_m
&=&\gamma+\sum_{k=1}^m \binom{m}{k}(-2)^k\sum_{n\geq 1}^{\mathcal{R}}\frac1{n^k} -\sum_{k=1}^m \binom{m}{k}(-2)^k-1.
\end{eqnarray*}
and 
$$
y_m=\int_1^2\frac1{x}\big(1-\frac{2}{x}\big)^m\ \ dx=\sum_{k=0}^m \binom{m}{k}(-2)^k\int_1^2\frac1{x^{k+1}}\ dx.
$$
The conclusion follows from (R15) which gives for $k\geq 1$
$$\sum_{n\geq 1}^{\mathcal{R}}\frac1{n^k} =\zeta(k+1)-\frac1k.$$
\end{proof}

 \subsection{ The polynomial expansion}

\begin{theorem}
{\it For  $0<\Re(s)<1$, we have
\begin{equation}
\zeta(s)=\frac{1}{s-1}
+\sum_{m= 0}^{+\infty}z_m\, P_m(s),
\end{equation}
with
$$
z_m=\gamma+\log(2)+\sum_{k=1}^m \binom{m}{k}(-2)^k\zeta(k+1)+(-1)^{m+1}
+\sum_{k=1}^m \frac{1}k.
$$
}
\end{theorem}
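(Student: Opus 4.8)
The plan is to observe that Theorem 7 adds nothing new to the analytic content: its display (33) is word for word the expansion (28) already proved in Theorem 6, so the whole task reduces to replacing the Ramanujan-summation expression $z_m=\sum_{n\geq 1}^{\mathcal{R}}\frac1{n+1}\big(\frac{n-1}{n+1}\big)^m$ by the announced closed form. Hence I would assemble the two pieces obtained just above in the subsection on evaluation of the sum and finish with one elementary binomial identity.

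First I would recall the decomposition $z_m=x_m+y_m$ derived there: applying the shift formula of Corollary 1 to $f:x\mapsto\frac1x\big(1-\frac2x\big)^m$, expanding $\big(1-\frac2x\big)^m$ by the binomial theorem, and inserting $\sum_{n\geq 1}^{\mathcal{R}}\frac1{n^{k+1}}=\zeta(k+1)-\frac1k$ from (17), one obtains, for $m\geq 1$,
$$x_m=\gamma+\sum_{k=1}^m\binom{m}{k}(-2)^k\zeta(k+1)-\sum_{k=1}^m\binom{m}{k}\frac{(-2)^k}{k}-(-1)^m,$$
and, after using $(-2)^k2^{-k}=(-1)^k$,
$$y_m=\log 2+\sum_{k=1}^m\binom{m}{k}\frac{(-2)^k}{k}-\sum_{k=1}^m\binom{m}{k}\frac{(-1)^k}{k}.$$

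Next I would add these two expressions. The key algebraic observation is that the awkward sum $\sum_{k=1}^m\binom{m}{k}(-2)^k/k$ occurs with opposite signs in $x_m$ and $y_m$, hence cancels, leaving
$$z_m=\gamma+\log 2+\sum_{k=1}^m\binom{m}{k}(-2)^k\zeta(k+1)+(-1)^{m+1}+\sum_{k=1}^m\binom{m}{k}\frac{(-1)^{k-1}}{k},$$
after writing $-(-1)^m=(-1)^{m+1}$.

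The only step that is not pure bookkeeping, and therefore the one I would flag as the main (if minor) obstacle, is to identify the last sum with the harmonic number $\sum_{k=1}^m\frac1k$. I would prove
$$\sum_{k=1}^m\binom{m}{k}\frac{(-1)^{k-1}}{k}=\sum_{k=1}^m\frac1k$$
by evaluating $\int_0^1\frac{1-(1-x)^m}{x}\,dx$ in two ways: expanding $1-(1-x)^m$ binomially and integrating term by term yields the left-hand side, while the substitution $u=1-x$ turns the integral into $\int_0^1(1+u+\cdots+u^{m-1})\,du$, which is the harmonic sum. Substituting this identity produces exactly the stated formula for $z_m$; the case $m=0$ is checked directly, since $z_0=x_0+y_0=\gamma-1+\log 2$ agrees with the formula at $m=0$. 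Because convergence of the series and every interchange of $\sum$ and $\int$ were already secured in the proof of Theorem 6, no further analytic justification is required.
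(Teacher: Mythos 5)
Your proof is correct and follows essentially the same route as the paper: Theorem~6 supplies the expansion with $z_m=\sum_{n\geq1}^{\mathcal R}\frac{1}{n+1}\bigl(\frac{n-1}{n+1}\bigr)^m$, the decomposition $z_m=x_m+y_m$ from the preceding subsection gives the closed form after exactly the cancellation of $\sum_{k=1}^m\binom{m}{k}(-2)^k/k$ you note, and the identity $\sum_{k=1}^m\binom{m}{k}\frac{(-1)^{k-1}}{k}=\sum_{k=1}^m\frac1k$ finishes the computation. The only difference is cosmetic: the paper cites that combinatorial identity without proof, whereas you supply a short and correct derivation by evaluating $\int_0^1\frac{1-(1-x)^m}{x}\,dx$ in two ways.
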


\begin{proof}

This follows from  the expansion (22) of  theorem 4.
Since by lemma 4, we have 
\begin{eqnarray*}
z_m=x_m+y_m
=\gamma+\log(2)+\sum_{k=1}^m \binom{m}{k}(-2)^k\zeta(k+1)+(-1)^{m+1}
+\sum_{k=1}^m \binom{m}{k}\frac{(-1)^{k-1}}k,
\end{eqnarray*}
 and we can use 
the combinatorial identity
$$
\sum_{k=1}^m \binom{m}{k}\frac{(-1)^{k-1}}k=\sum_{k=1}^m \frac{1}k.$$
\end{proof}

 The  previous  theorem will enable us to get an expansion of zeta in the strip   $\{0<\Re(s)<1\}$. Indeed, the polar part  $1/(s-1)$ of (28) has  the simple  expansion given by the following lemma.
 
\begin{lemma}
For  $0<\Re(s)<1$ we have
\begin{equation}
 \frac1{s-1}=-\sum_{m= 0}^{+\infty}y_m\, P_m(s).
\end{equation}
\end{lemma}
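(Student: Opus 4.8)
The plan is to interchange the summation and the integration defining $y_m$, and then recognize the generating function $G_s$. Recall that $y_m=\int_0^1\big(\frac{x-1}{x+1}\big)^m\frac1{x+1}\,dx$, so that, once the interchange is justified,
$$
\sum_{m=0}^{+\infty}y_mP_m(s)=\int_0^1\frac1{x+1}\Big(\sum_{m=0}^{+\infty}P_m(s)\Big(\frac{x-1}{x+1}\Big)^m\Big)\,dx=\int_0^1\frac1{x+1}\,G_s\Big(\frac{x-1}{x+1}\Big)\,dx,
$$
by the defining expansion (20) of $G_s$, which is legitimate because $\frac{x-1}{x+1}\in(-1,0]\subset D(0,1)$ for $x\in(0,1]$.

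The central computation is then the same algebraic identity already used in (18), now with the integer $n$ replaced by the real variable $x$: writing $z=\frac{x-1}{x+1}$ one finds $1-z=\frac2{x+1}$ and $1+z=\frac{2x}{x+1}$, hence $\frac{1-z}{1+z}=\frac1x$ and $\frac2{1-z}=x+1$, so that $\frac1{x+1}G_s\big(\frac{x-1}{x+1}\big)=x^{-s}$. Therefore
$$
\sum_{m=0}^{+\infty}y_mP_m(s)=\int_0^1 x^{-s}\,dx=\frac1{1-s},
$$
the integral being convergent precisely because $\Re(s)<1$, and (34) follows at once since $\frac1{1-s}=-\frac1{s-1}$.

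The only real obstacle is the justification of the interchange of $\sum$ and $\int$, which I would settle by Tonelli's theorem applied to the nonnegative integrand. The substitution $v=\frac{1-x}{1+x}$ yields the exact formula $y_m=(-1)^m\int_0^1\frac{v^m}{1+v}\,dv$, whence the crude bound $|y_m|\le\int_0^1 v^m\,dv=\frac1{m+1}$. Combining this with Lemma 1, which gives $|P_m(s)|\le e^{\pi|\Im(s)|}\big(\Gamma(\Re(s))\,m^{-\Re(s)}+\Gamma(1-\Re(s))\,m^{-(1-\Re(s))}\big)$ for $m\ge1$, one obtains
$$
\int_0^1\frac1{x+1}\sum_{m=0}^{+\infty}\Big|P_m(s)\Big(\frac{x-1}{x+1}\Big)^m\Big|\,dx=\sum_{m=0}^{+\infty}|P_m(s)|\,|y_m|<+\infty,
$$
because the resulting series has terms of order $m^{-(1+\Re(s))}+m^{-(2-\Re(s))}$ and both exponents exceed $1$ when $0<\Re(s)<1$. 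This finiteness simultaneously establishes the absolute convergence of $\sum_{m}y_mP_m(s)$ and licenses the Fubini interchange carried out above, completing the argument.
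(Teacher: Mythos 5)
Your proof is correct and takes essentially the same route as the paper: justify the interchange of $\sum$ and $\int$ via Lemma 1 together with an elementary bound on $y_m$ (your exact formula $y_m=(-1)^m\int_0^1\frac{v^m}{1+v}\,dv$ giving $|y_m|\le\frac1{m+1}$ plays the role of the paper's bound $\int_0^1\big|\frac{x-1}{x+1}\big|^m\frac{dx}{x+1}\le\frac1m$), then sum the series using the identity $\frac1{x+1}G_s\big(\frac{x-1}{x+1}\big)=x^{-s}$ from (18) and (20) and integrate $\int_0^1 x^{-s}\,dx=-\frac1{s-1}$. No gaps; the argument matches the paper's in substance.
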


\begin{proof}

For $0<\Re(s)<1$, we   write
\begin{equation}
\sum_{m= 0}^{+\infty}y_mP_m(s)=\sum_{m= 0}^{+\infty}\Big(\int_0^1\Big(\frac{x-1}{x+1}\Big)^m\frac1{x+1}\ \ dx\Big)
P_m(s)=\int_0^1\sum_{m= 0}^{+\infty}\frac1{x+1}\Big(\frac{x-1}{x+1}\Big)^mP_m(s)\ dx.
\end{equation}
This interchanging of signs $\sum$ and $\int$ is justified by  
$$
\sum_{m= 0}^{+\infty}\int_0^1\vert\frac{x-1}{x+1}\vert ^m\  
\vert P_m(s)\vert \frac1{x+1} \ dx\leq \sum_{m= 0}^{+\infty}\int_0^1\frac{1}{(x+1)^{m+1}}\ dx\ 
\vert P_m(s)\vert  
\leq 
\sum_{m= 0}^{+\infty}\frac{1}{m}
\vert P_m(s)\vert, 
$$
and we immediately check, with  lemma 1, that  the latter series is  convergent.

\noindent  Since we have by (7)
$$
\sum_{m= 0}^{+\infty}\frac1{x+1}\Big(\frac{x-1}{x+1}\Big)^mP_m(s)=x^{-s}
$$
then relation (29) is a consequence of (30).

\end{proof}

\medskip

\begin{theorem}

For $ 0<\Re(s)<1$,  we have 
\begin{equation}
\zeta(s)=\sum_{m= 0}^{+\infty}x_m\, P_m(s)
\end{equation}
with $x_0=\gamma-1,$ and for $m\geq 1$
\begin{equation}
x_m=\gamma+\sum_{k=1}^m \binom{m}{k}(-2)^k\zeta(k+1) +r_m
\end{equation}
with $r_m$ the rational number
 $$
 r_m=(-1)^{m+1}+\sum_{k=1}^m \frac{1+(-1)^{k-1}}{k}.$$
\end{theorem}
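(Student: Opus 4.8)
The plan is to read off the expansion by subtracting the two series already at our disposal. Theorem 7 gives, for $0<\Re(s)<1$, the expansion $\zeta(s)=\frac1{s-1}+\sum_{m\geq0}z_mP_m(s)$, while Lemma 3 expands the polar part as $\frac1{s-1}=-\sum_{m\geq0}y_mP_m(s)$. Both series converge throughout the strip (the first by Theorem 6, the second by the absolute convergence verified inside the proof of Lemma 3), so they may be subtracted term by term to give $\zeta(s)=\sum_{m\geq0}(z_m-y_m)P_m(s)$. By the decomposition $z_m=x_m+y_m$ recorded just before Theorem 7, this is precisely $\zeta(s)=\sum_{m\geq0}x_mP_m(s)$, which is (36). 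Everything else is bookkeeping on the coefficients.

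For the coefficients I would start from the explicit value $x_m=\gamma+\sum_{k=1}^m\binom{m}{k}(-2)^k\big(\zeta(k+1)-\frac1k\big)-(-1)^m$ (for $m\geq1$; the case $m=0$, where $x_0=\gamma-1$, is immediate). Since the $\zeta(k+1)$ terms are already in the desired shape, it remains only to show that the rational part
$$
-(-1)^m-\sum_{k=1}^m\binom{m}{k}(-2)^k\frac1k=(-1)^{m+1}+\sum_{k=1}^m\binom{m}{k}\frac{(-1)^{k-1}2^k}{k}
$$
equals $r_m=(-1)^{m+1}+\sum_{k=1}^m\frac{1+(-1)^{k-1}}{k}$. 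Thus the whole statement reduces to the single combinatorial identity
$$
\sum_{k=1}^m\binom{m}{k}\frac{(-1)^{k-1}2^k}{k}=\sum_{k=1}^m\frac{1+(-1)^{k-1}}{k},
$$
which is the $2^k$-analogue of the identity $\sum_{k=1}^m\binom{m}{k}\frac{(-1)^{k-1}}{k}=\sum_{k=1}^m\frac1k$ used in the proof of Theorem 7.

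To establish this identity I would write $\frac1k=\int_0^1x^{k-1}\,dx$ and use $\sum_{k=1}^m\binom{m}{k}(-1)^{k-1}(2x)^k=1-(1-2x)^m$ to get
$$
\sum_{k=1}^m\binom{m}{k}\frac{(-1)^{k-1}2^k}{k}=\int_0^1\frac{1-(1-2x)^m}{x}\,dx.
$$
The substitution $u=1-2x$ converts this into $\int_{-1}^1\frac{1-u^m}{1-u}\,du=\int_{-1}^1\big(1+u+\cdots+u^{m-1}\big)\,du$, and integrating the geometric sum termwise gives $\sum_{j=0}^{m-1}\frac{1-(-1)^{j+1}}{j+1}=\sum_{k=1}^m\frac{1+(-1)^{k-1}}{k}$, as required. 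This then yields (37).

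There is no serious obstacle here: once Theorem 7 and Lemma 3 are in hand, the expansion (36) is automatic, and the only real content is the elementary identity above, whose only delicate point is arranging the cancellation of the $\log2$ terms and the harmonic sums between $z_m$ and $y_m$ so that the remainder collapses to the stated rational number $r_m$. The reduction of the identity to $\int_{-1}^1\frac{1-u^m}{1-u}\,du$ via $u=1-2x$ makes even that step routine.
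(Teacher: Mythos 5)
Your proposal is correct and follows essentially the same route as the paper: you obtain the expansion by subtracting the series of Lemma 3 from that of Theorem 7 (both convergent in the strip, so termwise subtraction is legitimate), and reduce the coefficient bookkeeping via $z_m=x_m+y_m$ to the combinatorial identity $\sum_{k=1}^m\binom{m}{k}\frac{(-1)^{k-1}2^k}{k}=\sum_{k=1}^m\frac{1+(-1)^{k-1}}{k}$, which is exactly the paper's argument. The only difference is that you actually prove the identity (via $\int_0^1\frac{1-(1-2x)^m}{x}\,dx=\int_{-1}^1\frac{1-u^m}{1-u}\,du$) where the paper merely cites it --- and your version in fact corrects a misprint, since the identity as printed in the paper, with $(-2)^{k-1}$ in place of $(-1)^{k-1}2^k$, is off by a factor of $2$.
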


\begin{proof}
This is an immediate  consequence of the preceding results and the combinatorial identity
$$
\sum_{k=1}^m \binom{m}{k}\frac{(-2)^{k-1}}k=\sum_{k=1}^m \frac{1+(-1)^{k-1}}{k}.
$$

\end{proof}

For the first values of  $m$ we get 
\begin{eqnarray*}
x_0&=&\gamma-1,\\
x_1&=& \gamma-2\zeta(2)+3,\\
x_2&=& \gamma-4\zeta(2)+4\zeta(3)+1,\\
x_3&=& \gamma-6\zeta(2)+12\zeta(3)-8\zeta(4)+11/3,\\
x_4&=& \gamma-8\zeta(2)+24\zeta(3)-32\zeta(4)+16\zeta(5)+5/3.\\
 \end{eqnarray*}
 
\begin{remark} 
Since by (24)  we have, 
$
\overline{Q_m(t)}=(-1)^mQ_m(t),
$
for every $m\geq 0$, then we have for $t\in \mathbb R$

\begin{eqnarray*}
\Re\zeta(\frac{1}2+it)&=&2\sum_{m=0}^{+\infty} x_{2m}Q_{2m}(t),\\
\Im\zeta(\frac{1}2+it)&=&-2i\sum_{m=0}^{+\infty} x_{2m+1}Q_{2m+1}(t).
\end{eqnarray*}
\end{remark}

\begin{remark}
Using the generating function (6), we verify that $P_{2m}(\frac12)=2\binom{2m}m\frac{1}{2^{2m}}$ and $P_{2m+1}(\frac12)=0$.
Thus the relation $(1-\sqrt 2)\zeta(\frac12)= \eta(\frac12)$ gives 
$$
\sum_{m= 0}^{+\infty}\binom{2m}m\frac1{2^{2m}}
{((1-\sqrt 2)x_{2m}-a_{2m})}=0.
$$
which is a sort of infinite relation between the constants  $\gamma$, $\log(2)$, $\sqrt 2$ and the zeta values.
\end{remark}

\subsection{An orthonormal expansion}

 Let $X(0)=\gamma-1$ and  for $k\geq 1$ let 
 $$
 X(k)=\zeta(k+1)-1-\frac1k .$$
 Then formula  (26) can be written as a binomial transform 
 \begin{equation}
x_m=\sum_{k=0}^m \binom{m}{k}(-1)^k2^kX(k).
\end{equation} 
Let the function $f_0$ defined for $t>0$ by 
$$
f_0(t)=\frac1{e^t-1}-\frac1t,
$$
it is well-known that  $\mathcal{M}f_0(s)=\Gamma(s)\zeta(s)$ for $0<\Re(s)<1$.
We easily verify that  for $k\geq 0$ we have 
$$
X(k)=\int_0^{+\infty}e^{-t} \frac{t^k}{k!}\  f_0(t)\ dt.  
$$
Then by (33) and the definition of Laguerre polynomials, we obtain 
\begin{equation*}
x_m= \int_0^{+\infty}e^{-t} L_{m}(2t)f_0(t)\ dt
=\frac1{\sqrt2}\int_0^{+\infty}\varphi_m(t)f_0(t) \ dt=\frac1{\sqrt2}(\varphi_m,f_0). 
\end{equation*}
Since $f_0$ is in the space $L^2(]0,+\infty[,dx)$, this means that we have in this space
 the orthonormal expansion
 \begin{equation}
f_0=\sqrt{2}\, \sum_{m=0}^{+\infty} x_m\, \varphi_m.
\end{equation}

Just as in Theorem 3 on the eta function,   we get  an  orthonormal expansion for zeta if we restrict $s$ to the line $\{\Re(s)=\frac12\}$.

\medskip

\begin{theorem}
For any integer $m\geq 0$, we have 
\begin{equation}
x_m=\frac1{2}\int_{-\infty }^{+\infty }\overline{Q_m(t)}\ \zeta(\frac12+it)\frac{dt}{\cosh(\pi  t)}.
\end{equation}

In the space $L^2\big(\mathbb R,\frac{dt}{\cosh(\pi t)}\big)$, the function 
$\zeta_{1/2}:t\mapsto \zeta(\frac{1}2+it)$ has the orthonormal expansion
$$
\zeta_{1/2}=2\sum_{m=0}^{+\infty} x_m\, Q_m.
$$

\end{theorem}
  
  \medskip
  
\begin{proof}
By  Mellin-Parseval  formula (cf. [T]), we have
$$
x_m=\frac1{\sqrt2}\int_{0}^{+\infty }{\varphi_m(t)}f_0(t)\ dt=\frac{1}{2\pi\sqrt2 }\int_{-\infty }^{+\infty }
\overline{\mathcal{M}(\varphi_m)(\frac{1}2+it)}\ \mathcal{M}(f_0)(\frac{1}2+it)\ dt.
$$
 Thus we get
\begin{eqnarray*}
x_m
&=&\frac{1}{2}\int_{-\infty }^{+\infty }%
\overline{\frac12P_m(\frac12+it)}\ \zeta(\frac{1}2+it)\frac{dt}{\cosh(\pi  t)}.
\end{eqnarray*}
 The second assertion follows from the fact that the function $t\mapsto \zeta_{1/2}$ is in $L^2\big(\mathbb R,\frac{dt}{\cosh(\pi t)}\big)$ since
$$
\int_{-\infty }^{+\infty }
\vert \zeta(\frac{1}2+it)\vert ^2\frac{dt}{\cosh(\pi  t)}
<+\infty.
$$

\end{proof}

\medskip

\begin{remark}

By (34) and (35) we deduce that 
\begin{equation}
\int_{-\infty }^{+\infty }\Big\vert \zeta(\frac12+it)\Big\vert^2\frac{dt}{\cosh(\pi  t)}=4\sum_{m=0}^{+\infty} x_m^2=2\int_0^{+\infty}\Big(\frac1{e^t-1}-\frac1t\Big)^2dt=2\log(2\pi)-2\gamma-1.
\end{equation}

\end{remark}

\medskip

\begin{corollary}
We have    
  \begin{equation} 
\gamma=1+\frac1{2}\int_{-\infty }^{+\infty }\zeta(\frac12+it)\frac{dt}{\cosh(\pi  t)},
  \end{equation}
and for $m\geq 1$
  \begin{equation}
\zeta(m+1)=1+\frac1m+\frac12\int_{-\infty }^{+\infty }\zeta(\frac12+it)\frac{(\frac12-it)_m}{m!}\frac{dt}{\cosh(\pi  t)}.
  \end{equation}

\end{corollary}

\medskip

\begin{proof}

With (33) and
 inversion of the binomial transform, we get
  \begin{equation*}
\sum_{k=0}^m \binom{m}{k}(-1)^kx_k=
2^mX(m).
\end{equation*} 
As in corollary 2 we obtain by (35) the integral expression
 
\begin{eqnarray*}
X(m)
&=&\frac1{2^{m+1}}\int_{-\infty }^{+\infty }%
\sum_{k=0}^m \binom{m}{k}(-1)^k\overline{Q_k(t)}\ \zeta(\frac{1}2+it)\frac{dt}{\cosh(\pi  t)}\\
\end{eqnarray*}
and by (9) this is 
$$
X(m)=\frac12\int_{-\infty }^{+\infty }\frac{(\frac12-it)_m}{m!}\zeta(\frac12+it)\frac{dt}{\cosh(\pi  t)}.
$$

\end{proof}

\newpage 
 
\section{Appendix : Ramanujan's summation}
Below is a brief overview of the Ramanujan's  summation  method (cf. [Can]).

\subsection{Ramanujan's constant of a series}

Let a function $f $ in $C^{1 }(]0,+\infty \lbrack )$, then for every integer $n\geq1$, we have the first order  Euler-MacLaurin formula 
\begin{equation} \label{R1} 
{\sum_{k=1}^{n}f(k)=\int_{1}^{n}f(x)\, d x+\frac{1}{2}\big(f(n)+f(1)\big)+%
\int_{1}^{n}b(x)f^{\prime }(x)\ d x},\tag{R1}
\end{equation}
where $b$ is the periodic function defined by
$$
 b(x)=x-[x]-\frac12 \ \text{ where } \ [x] \ \text{ denote the integer part of }\ x.
 $$ 
 Let's assume that the integral  $\int_{1}^{+\infty}b(x)f^{\prime }(x)\ d x$ is convergent. Then the  previous formula gives  
\begin{align}  \label{R2}
\begin{split}
\sum_{k=1}^{n}f(k)&=\int_{1}^{n}f(x)\, d x+C(f)+\frac{1}{2}f(n)-\int_{n}^{+\infty}b(x)f^{\prime }(x)\ d x,\\
\text{ with  }\ & C(f)=\frac{1}{2}f(1)+
\int_{1}^{+\infty}b(x)f^{\prime }(x)\ d x.
\end{split}\tag{R2}
\end{align}
The constant $C(f)$ is called \textit{the Ramanujan constant of the series $\sum_{n\geq 1}f(n)$} (cf. [H1]). 

\medskip

\noindent Clearly $C(f)$ depends linearly of $f$, 
and if the series $\sum_{n\geq 1}f(n)$ is  convergent, then by (R2) we have
$$
C(f)=\sum_{k=1}^{+\infty}f(k)-\int_{1}^{+\infty}f(x)\ d x.
$$
We observe that $C(f)$ is  also well defined for some divergent series :
for example, we have  immediately $C(1)=1/2,$  and applying (R2) to  the function $f:x\mapsto   1/x$, 
we have  for any integer $n\geq1$
\begin{equation*}
\sum_{k=1}^{n}\frac1k=\log n+C(f)+\frac{1}{2n}+%
\int_{n}^{+\infty}b(x)\frac1{x^2}\ d x,
\end{equation*}
and taking the limit as $n\to +\infty$, we see that   Ramanujan's constant of the divergent series  $\sum_{n\geq 1}  \frac1n$ is the \textit{Euler constant} :
$$
\gamma=\lim_{n\to +\infty}\Big(\sum_{k=1}^{n}\frac1k-\log n\Big).$$

\subsection{Plana's formula}
The following summation formula was discovered by  Plana in 1820  (also by Abel in 1823) (cf. [Lin]).
\medskip

\begin{theorem}

  Let  $f$ be an analytic function in the half-plane  $\{ \Re(z)>0\}$ such that there is a constant    ${a}<2\pi$ and, for any interval $J=[c,d]\subset]0,+\infty[$,  a constant $M_J>0$ with
\begin{equation*} 
\left| f(x+it)\right| \leq M_J\ e^{a\vert t\vert },\  \text{ for  } x\in J\ \text{ and } t\in \mathbb{R}.
\end{equation*}
Then  we can write Plana's formula : for any integer  $n\geq 1$ we have 
\begin{align*}
\sum_{k=1}^{n}f(k) &=\frac{f(1)+f(n)}{2}+\!\int_{1}^{n}f(u)\ d u\nonumber\\
&+i\!\int_{0}^{+
\infty }\frac{f(1+it)-f(1-it)}{e^{2\pi t}-1}\,\ d t 
-i\!\int_{0}^{+\infty }\frac{f(n+it)-f(n-it)}{e^{2\pi t}-1}\,\ d t\,.
\end{align*}
\end{theorem}

\medskip

If we now compare  Euler-MacLaurin formula (R1) 
and  Plana's formula,  we get, for $n\geq 1$,
\begin{equation*} \int_1^nb(x){f'(x)}\ dx= i\!\int_{0}^{+
\infty }\frac{f(1+it)-f(1-it)}{e^{2\pi t}-1}\ d t 
-i\int_{0}^{+\infty }\frac{f(n+it)-f(n-it)}{e^{2\pi t}-1}\,\ d t\,.
\end{equation*}
Then if the integral $\int_{1}^{+\infty}b(x)f^{\prime }(x)\, d x $ is convergent and if  
$$ 
\lim_{ n\to+\infty}\int_{0}^{+\infty }\frac{f(n+it)-f(n-it)}{e^{2\pi t}-1}\ d t= 0, $$
we have 
$$ 
\int_1^{+\infty}b(t){f'(t)}\ d t= i\int_{0}^{+
\infty }\frac{f(1+it)-f(1-it)}{e^{2\pi t}-1}\,\ d t .
$$
Thus, in  this case  the Ramanujan constant $C(f)$, previously  defined in (R2),  is  also
\begin{equation} \label{R3}
C(f)=\frac{1}{2}f(1)+i\int_{0}^{+
\infty }\frac{{f(1+it)-f(1-it)}}{e^{2\pi t}-1}\,\ d t .\tag{R3}
\end{equation}

\begin{remark}
Let  $f$ and $g$ two functions  verifying the assumptions of theorem 8 and such that 
  $f(n)=g(n) $ for every integer  $n\geq 1$. Then  the Ramanujan constants  $C(f)$ and $C(g)$,  defined by 
 (R3), need not  be  the same.
For example, if $f\equiv0$ and  $g : z\mapsto \sin(\pi z)$, then $C(f) =0$, but 
 \begin{align*}
C(g)=i\int_{0}^{+\infty }\frac{\sin \big(\pi (1+it)\big)-\sin \big(\pi (1-it)\big)}{e^{2\pi t}-1}\,\ d t =\int_{0}^{+\infty }
\frac{e^{\pi t}-e^{- \pi t}}{e^{2\pi t}-1}\,\ d t
\;\;=\; \int_{0}^{+\infty }
e^{-\pi t}\ d t= \frac1{\pi}.
\end{align*}
To avoid  this problem, we will consider a space of analytic functions $f$ such that the Ramanujan constant $C(f)$,  defined by   (R3), only depends 
 on  the values   $f(n) $ for all integers  ${n\geq 1}$.

\end{remark}

\subsection{The space $\mathcal{E}$}

\medskip

\begin{definition}

Let   ${\mathcal E}$   the space of  analytic functions $f$ in the half plane $\{\Re(z)>0\}$ such that there are some constants  $\emph{a}<\pi$ and $\emph{b}>0$ with :

for every  ${\varepsilon}>0$,  there is a constant $C_{\varepsilon}$ such that  
\begin{equation} \label{R4} 
\left| f(z)\right| \;\;\leq\;\; C_{\varepsilon}\ e^{b\, \Re(z)} \ e^{a\, |\Im (z)|},\ \text{ if } \ \Re(z)>{\varepsilon}.\tag{R4}
\end{equation}

\end{definition}

\medskip

\begin{example} Let $f_s\;:\;z\mapsto e^{sz}$, with $s=\sigma+i\tau$, $\sigma\in \mathbb{R}$ and $ \tau\in \mathbb{R} $ .   For $\Re(z)>0$, we have
$$
\vert f_s(z) \vert \;\;=\,\;e^{\sigma \Re(z)-\tau \Im (z)}.
$$
Thus  $f_s \in \mathcal E$ if and only if $ -\pi< \Im(s)<\pi$.

\end{example}

\medskip

An important theorem for the rest of this section is the following.

\medskip

\begin{theorem}\textbf{Carlson's theorem} (cf.  [H2])

If $f\in \mathcal E$ is such that  $f(n)=0$ for all $n\;=\;1,\,2,\,\ldots$,  then  $f\equiv 0$.

Thus a function  $f\in \mathcal  E$ is only determined by the values  $f(1),f(2),f(3),\dots$.

\end{theorem}

\medskip
\begin{remark}

The function $g\;:\;z\mapsto \sin(\pi z)$ is such that   $g(n) =0 $ for every integer ${n\geq 1}$,  but $g\notin \mathcal E$ since
for $x\in \mathbb{R}$ and $t\in \mathbb{R}$ 
$$
\vert g(x+it)\vert\sim \frac12e^{\pi \vert t \vert} \text{ when } t\to \infty.
$$ 
\end{remark}

\subsection{ Ramanujan's method of summation}

Let  $f$ and $g$ two functions  verifying the hypothesis of theorem 8.
By  Carlson's theorem we know that if  $f\in \mathcal E$ and $g\in \mathcal E$ verify ${f(n)=g(n)}$ for  $n=1,2,3,\dots$, then for  ${x>0}$ and $t\in \mathbb{R}$, we have 
$$ {f(x+it)-f(x-it)}={g(x+it)-g(x-it)},$$ 
thus, by (R3), we have
 $C(f)=C(g).$
 This leads to the following definition.   
\medskip

\begin{definition}
For  $f\in \mathcal E$,  we define  \textit{the Ramanujan's summation of  the series} $\sum_{n\geq1}f(n)$ by 
\begin{equation} \label{R5} 
\sum_{n\geq1}^{\mathcal{R}}f(n)=C(f)=\frac{f(1)}{2}
+ i\!\int_{0}^{+\infty }\frac{f(1+it)-f(1-it)}{e^{2\pi t}-1}\,\ d t\,.\tag{R5}
\end{equation}

\end{definition}

\medskip
It is clear that  $\sum_{n\geq1}^{\mathcal{R}}f(n)
$ depends linearly  on $f$.

\medskip

\begin{remark}
 
Let  $f\in \mathcal E$ then then $f$ satisfies the hypotheses of theorem 8, and by  Plana's formula, we have for every integer $n\geq 1$

\begin{equation} \label{R6}
\sum_{k=1}^{n}f(k)=\!\int_{1}^{n}f(u)\ d u+\frac{f(n)}{2}+\sum_{n\geq1}^{\mathcal{R}}f(n)-i\int_{0}^{+\infty }\frac{f(n+it)-f(n-it)}{e^{2\pi t}-1}\,\ d t\,.\tag{R6}
\end{equation}
If 
 $\lim_{n\to +\infty}f(n+it)=0$ for every  $\ t\in \mathbb{R}$, 
then, by dominated convergence, we get  
$$
\lim_{n\to +\infty}\int_{0}^{+\infty }\frac{f(n+it)-f(n-it)}{e^{2\pi t}-1}\,\ d t\,=0,
$$
and if in addition,  the integral 
 $\int_{1}^{+\infty }f(u)\ d u $ is convergent,
 then the series $ \sum_{n\geq1}f(n)$ is convergent. 
In this case  (R6) gives the relation
\begin{equation*} 
  \sum_{n\geq1}^{\mathcal{R}}f(n)\;\;=\sum_{n=1}^{+\infty} f(n)-\!\!\int_{1}^{+\infty}\!\!f(x)\ d x.
\end{equation*}
\end{remark}

\medskip
\begin{example}

Let $f:x\mapsto \frac1{x^s}$, then for $\Re(s)>1$, we get  
\begin{equation*} 
  \sum_{n\geq1}^{\mathcal{R}}\frac1{n^s}\;\;=\sum_{n=1}^{+\infty} \frac1{n^s}-\!\!\int_{1}^{+\infty}\!\!\frac1{x^s}\ d x,
\end{equation*}
thus, for $\Re(s)>1$, we have 
\begin{equation}  \label{R7}
  \sum_{n\geq1}^{\mathcal{R}}\frac1{n^s}\;\;=\zeta(s)-\frac1{s-1}.\tag{R7}
  \end{equation}
  \end{example}

\subsection{Interpolation of  partial sums}
\medskip

To evaluate Ramanujan's summation of a divergent series $\sum_{n\geq1}f(n)$, a simpler  formula than (R5) can be obtained by means of  an interpolation function of the  partial sums.
In  chapter  VI of his  Notebooks,   Ramanujan uses, without  any definition, such a function $\varphi$
which satisfies
$${\varphi(x)-\varphi(x-1)=f(x)}\ \text{ with  } \varphi(0)=0.$$
The following theorem clarifies this point.
 \smallskip

\begin{theorem} Let  $f\in \mathcal E$.
There exists  a unique function  $\varphi_f\in \mathcal E$, such that   
\begin{equation}  \label{R8} 
\varphi_f(x+1)-\varphi_f(x)=f(x+1)\ \text{ for all }x>0
\text { and }
\varphi_f(1)=f(1). \tag{R8}
\end{equation}

\end{theorem}

\medskip

\begin{proof}

a)  \emph{Existence.}
 Since $f\in \mathcal E$ then the function $\varphi_f$, defined for  $\Re(z)>0$,  by 
\begin{equation} \label{R9} 
\varphi_f(z) =\int_{1}^{z}f(u)\ d u+\frac{f(z)}{2}+\sum_{n\geq1}^{\mathcal{R}}f(n)-i\int_{0}^{+\infty }\frac{f(z+it)-f(z-it)}{e^{2\pi t}-1}\ d t,\tag{R9}
\end{equation}
is also in 
$\mathcal E$.  
By  (R6) 
we have $\varphi_f(n)=\sum_{k=1}^{n}f(k)$ for every integer  $n\geq 1$, thus the function  $\varphi_f$ verifies  
$$
\varphi_f(n+1)-\varphi_f(n)\,=f(n+1) ,  \text{ for }  n= 1,2,3,\dots.$$
Since the functions  $z\mapsto \varphi_f(z+1)-\varphi_f(z)$ and $z\mapsto f(z+1)$ are in   $\mathcal E$, then by  Carlson's theorem, we get  
\begin{equation} \label{R10}
\varphi_f(z+1)-\varphi_f(z)=f(z+1),\quad\!\! \text{ for all }z \text{ such that }  \Re(z)>0.\tag{R10}
\end{equation}
 The condition $\varphi_f(1)=f(1)$ is simply a consequence of (R9) and definition  (R5).
 
 \medskip
 
 b) \emph{Uniqueness.}

If  $\varphi \in \mathcal E$ is such that   
$$ 
\varphi(x+1)-\varphi(x)=f(x+1)\  \text{ for }x>0\  \text{ and }
\varphi(1)=f(1),
$$
then, for every integer  $n\geq 1$,  we get  
$$ 
\varphi(n)-f(1)=\varphi(n)-\varphi(1)\;=\sum_{k=1}^{n-1}\big(\varphi(k+1)-\varphi(k)\big) =\sum_{k=1}^{n-1}f(k+1),
$$
thus   $\varphi(n)=\sum_{k=1}^{n} f(k)=\varphi_f(n)$ for every integer  $n\geq 1$. 
Since   $\varphi $ and $\varphi_f$ are in the space  $\mathcal E$, then by  Carlson's theorem we get  $\varphi =\varphi_f.$

\end{proof}

\begin{remark}

The preceding proof  asserts that if  $f\in \mathcal E$ then the function $\varphi_f$ given  by (R9) satisfies
$$
\varphi_f(n)=\sum_{k=1}^{n} f(k) \text{ for every integer } n\geq1.
$$
Thus this function $\varphi_f$  is the unique  function in  $ \mathcal E$ which   interpolates  the partial sums of the series  $\sum_{n\geq 1}f(n)$.
Note that, by (R10), we have for $\Re(z)>0$
 $$
\varphi_f(z)=\varphi_f(z+1)-f(z+1),
$$
and since the right-hand side defines  an analytic function  in the half-plane $\{\Re(z)>-1\}$, this relation gives an analytic continuation of  $\varphi_f$ in  this half-plane. Let's denote again by $\varphi_f$ this continuation, then the relation  ${\varphi_f(1)=f(1)}$ provides the  condition required by Ramanujan
$${\varphi_f(0)=\varphi_f(1)-f(1)=0.}$$
\end{remark}

\medskip

\begin{example}

Let  the function $f:z\mapsto   1/z$. We will see that   $\varphi_f$   is related to  {\it  digamma function} which is defined, for  $\Re(z)>0$, by 
$$
\Psi(z)=\frac{\Gamma'(z)}{\Gamma(z)}.
$$
The function $\Psi$ is analytic in   $\{\Re(z)>0\}$, 
and  (cf.  [G-R]) we have 
 for $\Re(z)>0$
\begin{equation*} 
\Psi(z)=-\frac1z-\gamma+\sum_{n=1}^{+\infty}\big(\frac1n-\frac1{n+z}\big)= 
-\frac1z-\gamma+\sum_{n=1}^{+\infty}\frac z{n(n+z)}.
\end{equation*}
With this expression it is easily verified that  $\Psi \in\mathcal E$ and 
that 
\begin{equation*}  
\Psi(z+2)\;-\;\Psi(z+1)=\frac1{z+1}.
\end{equation*}
Since $\Psi(1)=-\gamma$,
then $\Psi(2)+\gamma=1$ and by the previous theorem, we get  for $\Re  (z)>0$
$$
   \varphi_{f}(z)=\Psi(z+1)+\gamma.
$$

\end{example}

\medskip

\begin{theorem} 
Let $f\in \mathcal E$, and let $\varphi\in \mathcal E$ be a solution of 
\begin{eqnarray*}
\varphi(x+1)-\varphi(x)=f(x+1),\  \text{ for all }\ x>0,\ 
\text{ and }\ \varphi(1)=f(1).
\end{eqnarray*}
Then we have
$$\sum_{n\geq1}^{\mathcal{R}}f(n)\;\; =\int_0^1\varphi(x) \ dx.$$

\end{theorem}

\medskip

\begin{proof}

By  theorem 10, we have  $\varphi=\varphi_f$.
  Let $F(z)=\int_1^zf(u)\ du$, then integration of    (R9)  over  $[1,2]$ 
gives
\begin{equation}\label{R11}
\int_1^2\varphi_f(x) \ d x\;\;=\int_1^2F(u)\ d u\,+\,\frac12\int_1^2f(x)\ d x\,+\sum_{n\geq1}^{\mathcal{R}}f(n)-iD(f),\tag{R11}
\end{equation}

where  
$$ 
D(f)=\int_1^2\Big(\int_{0}^{+\infty }\frac{f(x+it)-f(x-it)}{e^{2\pi t}-1}\,\ d t\Big)\ d x.
$$
By Fubini's theorem   we have
$$
D(f)\;\; =\int_0^{+\infty}{\Big(\int_1^2{f(x+it)-f(x-it)}\ d x\Big) }\frac1{e^{2\pi t}-1} \ dt.
$$
Thus 
$$ 
D(f) = \int_0^{+\infty}\frac{F(2+it)-F(2-it)}{e^{2\pi t}-1}\ dt
-\!\int_0^{+\infty}\frac{F(1+it)-F(1-it)}{e^{2\pi t}-1}\ dt .
$$
\smallskip

\noindent If we apply  Plana's formula   to the function $F$, in the case  $n=2$, then we get  
\begin{eqnarray*}
i \int_0^{+\infty}\frac{F(2+it)-F(2-it)}{e^{2\pi t}-1}\ dt
-i\int_0^{+\infty}\frac{F(1+it)-F(1-it)}{e^{2\pi t}-1}\ dt=-\frac12F(1)-\frac12F(2)+\!\int_1^{2}F(t)\ dt,
\end{eqnarray*}
that is
$$
iD(f)=-\frac{F(2)}2+\int_1^2F(t)\ dt.
 $$
Then by (R11) we get
\begin{eqnarray*}
\int_1^2\varphi_f(x) \ d x&=&\frac12\int_1^2f(x)\ dx+\sum_{n\geq1}^{\mathcal{R}}f(n)
+\frac{F(2)}2\\
&=& \int_1^2f(x)\ dx+\sum_{n\geq1}^{\mathcal{R}}f(n),
\end{eqnarray*}
from which we deduce that 
$$\sum_{n\geq1}^{\mathcal{R}}f(n)
=\int_0^1\big(\varphi_f(x+1)-f(x+1)\big)\ dx =\int_0^1\varphi_f(x)\ dx.
$$
\end{proof}

\begin{corollary}
 
 Let $f\in \mathcal E$, then the function $x\mapsto f(x+1)$ belongs to   $\mathcal E$ and  we have \textit{the shift formula}
  \begin{equation} \label{R12}
  \sum_{n\geq1}^{\mathcal{R}}f(n+1)\;\; =\sum_{n\geq1}^{\mathcal{R}}f(n)-f(1)+\int_0^1f(x+1) \ dx.\tag{R12}  \end{equation}

\end{corollary}

\medskip

\begin{proof}

Let $g:x\mapsto f(x+1)$, then it is easily verified, by theorem 10, that we have for  $x>0$ 
$$
\varphi_g(x) =\varphi_f(x+1)-f(1).
$$
Thus we obtain
$$
\int_0^1\varphi_g(x) d x=\int_0^1\varphi_f(x+1)\ dx-f(1)=\int_0^1\varphi_f(x)\ dx+\int_0^1f(x+1) \ dx-f(1),
$$
and the conclusion follows from  theorem 11.
\end{proof}

\medskip

\begin{example}
 Let  $f:z\mapsto 1/z$, as we saw in Example 3, we have 
$
\varphi_{f}(x)=\Psi(x+1)+\gamma,
$
thus we get 
$$
  \sum_{n\geq1}^{\mathcal{R}}\frac1n=\int_0^1(\Psi(x+1)+\gamma )\ \ dx=
\int_0^1\frac{\Gamma'}{\Gamma}(x+1)\ dx+\gamma $$
and this gives again 
\begin{equation} \label{R13}
\sum_{n\geq1}^{\mathcal{R}}\frac1n=\gamma. \tag{R13}
\end{equation}
Observe that, by the shift formula (R12), we  have 
$$\sum_{n\geq1}^{\mathcal{R}}\frac1{n+1} =\gamma-1+\log(2).$$
\end{example}

\medskip

\begin{example}
Let us evaluate $ \sum_{n\geq 1}^{\mathcal{R}}n^k$ for  any $k$ integer $\geq 1$. Recall that  the  \textit{Bernoulli polynomials} $B_{k}(x)$ are defined by 
$B_0(x)=1$ and, for  $k\geq 1, $
$$
B'_k(x)=kB_{k-1}(x) \ \text{ with }\ \int_0^1B_k(x)\ dx=0,
$$
and the \textit{Bernoulli  numbers} are the numbers $B_k=B_k(0)$. For $k\geq 1$, we have  $B_{k+1}(1)=B_{k+1}$.
These  polynomials verify (cf. [G-R])
$$
B_{k+1}\big((x+1)+1\big)-B_{k+1}(x+1)=(k+1)(x+1)^{k}.
$$
Let  the function  $f:x\mapsto x^k$  with $k$ integer $\geq 1$. Then 
  by theorem 10, we obtain
\begin{equation*}
\varphi_{f}(x)=\frac{B_{k+1}(x+1)}{k+1}-\frac{B_{k+1}}{k+1}.
 \end{equation*}
Since
$$
\int_0^1\frac{B_{k+1}(x+1)}{k+1}\ dx =\int_0^1\frac{B_{k+1}(x)}{k+1}\ dx\,+\int_0^1x^k
\ dx=\frac{1}{k+1},
$$
then theorem 11 gives  
\begin{equation} \label{R14}
\sum_{n\geq 1}^{\mathcal{R}}n^k=\frac{1-B_{k+1}}{k+1}.  \tag{R14}
 \end{equation}

 \end{example}

 \begin{remark}
 Relations  (R13) and (R14) can be written in terms of the zeta function :
\begin{eqnarray*}
 \sum_{n\geq 1}^{\mathcal{R}}\frac1n&=&\lim_{s\to 1}\Big(\zeta(s)-\frac1{s-1}\Big),\\
  \sum_{n\geq 1}^{\mathcal{R}}n^k&=&\zeta(-k)-\frac1{-k-1} \text{  for every integer }  k\geq 1. 
  \end{eqnarray*}
Note also that $$\sum_{n\geq 1}^{\mathcal{R}}n^0=\frac12=\zeta(0)+1.$$
  \medskip
These formulae are extensions of  (R7) and  are special cases  the following general result.

\end{remark}

 \begin{theorem}
 The function 
\begin{equation*}
s\mapsto \sum_{n\geq1}^{\mathcal{R}}\frac1{n^s}\;\;
\end{equation*}
is analytic on $ \mathbb{C}$ and we have for every 
 $s\in\mathbb{C}\setminus  \{1\}$

\begin{equation} \label{R15}
\zeta(s)= \sum_{n\geq 1}^{\mathcal{R}}\frac1{n^s}+\frac1{s-1}. \tag{R15} 
\end{equation}

 \end{theorem}

 \medskip
 
\begin{proof}
 
Let $s=\sigma +i\tau$ with $\sigma\in \mathbb{R} ,\  \tau\in \mathbb{R}$,  and  $z=x+it$ with  $x>\varepsilon>0$,  $t\in \mathbb{R}$.
We have 
$$ 
\Big\vert \frac1{z^s}\Big\vert=\Big\vert \frac1{(x+it)^{s}}\Big\vert =\Big\vert e^{-s\log(x+it)}\Big\vert=e^{-{\sigma} \log\sqrt{x^2+t^2} +\tau \arctan(\frac tx)}\leq e^{\vert \tau\vert \pi/2 }(x^2+t^2)^{-\sigma/2}
$$
Thus  for every   $s\in \mathbb{C}$, the function $z\mapsto   1/{z^s}$ is in the space $\mathcal E$ and the sum  $ \sum_{n\geq 1}^{\mathcal{R}}\frac1{n^s}$ is well defined. By definition (R5), we have  
$$
 \sum_{n\geq 1}^{\mathcal{R}}\frac1{n^s}= \frac{1}{2}
+i\int_{0}^{+\infty }\Big({\frac1{(1+it)^s}-\frac1{(1-it)^s}}\big)\frac1{e^{2\pi t}-1}\,\ dt.
$$

This last integral depends analytically of $s\in \mathbb{C}$ so the function  $s\mapsto \sum_{n\geq1}^{\mathcal{R}}\frac1{n^s}$ is analytic on  $\mathbb{C}$. 
Since  the function $s\mapsto \zeta(s)-\frac1{s-1}$ is analytic on $\mathbb{C}\setminus  \{1\}$ then, by analytic continuation of (R7), we get 
$$ 
 \sum_{n\geq 1}^{\mathcal{R}}\frac1{n^s}=\zeta(s)-\frac1{s-1}
 \text{ for any } s\in \mathbb{C}\setminus  \{1\}.
 $$

\end{proof}

\end{document}